\newcommand{\RN}[1]{%
	\textup{\uppercase\expandafter{\romannumeral#1}}%
}
\def\bp{{\bar\partial}}
\def\pa{\partial}
\def\wt{\widetilde}
\def\Re{ \mathrm{Re}}
\def\OO{\mathcal{O}}
\def\C{\mathbb{C}}
\def\R{\mathbb{R}}
\theoremstyle{plain}
\newtheorem*{thm*}{Theorem}
\newtheorem{thm}{Theorem}[section]
\newtheorem{lem}[thm]{Lemma}
\newtheorem{cor}[thm]{Corollary}
\newtheorem{prop}[thm]{Proposition}
\newtheorem*{prop*}{Proposition}
\newtheorem*{lem*}{Lemma}
\theoremstyle{definition}
\newtheorem*{eg*}{Example}
\newtheorem*{egs*}{Examples}
\newtheorem*{def*}{Definition}
\theoremstyle{remark}
\newtheorem*{rmk*}{Remark}
\newtheorem*{rmks*}{Remarks}
\newcommand{\bfR}{\mathbf{R}}
\newcommand{\calK}{{\mathcal K}}
\newcommand{\calN}{{\mathcal N}}
\newcommand{\bfkappa}{{\bm \varkappa}}
\newcommand{\erfc}{\operatorname{erfc}}
\newcommand{\erf}{\operatorname{erf}}
\newcommand{\N}{\mathbb{N}}
\newcommand{\Z}{{\mathbb Z}}
\newcommand{\eps}{{\varepsilon}}
\newcommand{\re}{\operatorname{Re}}
\newcommand{\im}{\operatorname{Im}}
\newcommand{\Prob}{{\mathbf{P}}}
\newcommand{\Tr}{\operatorname{Tr}}
\newcommand{\diam}{\operatorname{diam}}
\newcommand{\Pf}{{\textup{Pf}}}
\newcommand{\curvature}{\varkappa}
\newcommand{\abs}[1]{\lvert#1\rvert}
\numberwithin{equation}{section}
\begin{document}
\title[Symplectic elliptic Ginibre ensemble]{Universal scaling limits of the symplectic elliptic Ginibre ensemble 
}


\author{Sung-Soo Byun}
\address{Center for Mathematical Challenges, Korea Institute for Advanced Study, 85 Hoegiro, Dongdaemun-gu, Seoul 02455, Republic of Korea}
\email{sungsoobyun@kias.re.kr}

\author{Markus Ebke}
\address{Department of Mathematics, Friedrich-Alexander-Universität Erlangen-Nürnberg, Cauerstrasse 11, 91058 Erlangen, Germany}
\email{markus.ebke@fau.de}

\thanks{The authors are grateful to the DFG-NRF International Research Training Group IRTG 2235 supporting the Bielefeld-Seoul graduate exchange programme. Furthermore, Sung-Soo Byun was partially supported by Samsung Science and Technology Foundation (SSTF-BA1401-51), by the National Research Foundation of Korea (NRF-2019R1A5A1028324) and by a KIAS Individual Grant (SP083201) via the Center for Mathematical Challenges at Korea Institute for Advanced Study.}
\subjclass[2020]{Primary 60B20; Secondary 33C45 
}
\keywords{Non-Hermitian random matrices, Pfaffians, Symplectic elliptic Ginibre ensemble, Scaling limits, Universality, Fine asymptotics}
\begin{abstract}
We consider the eigenvalues of symplectic elliptic Ginibre matrices which are known to form a Pfaffian point process whose correlation kernel can be expressed in terms of the skew-orthogonal Hermite polynomials. 
We derive the scaling limits and the convergence rates of the correlation functions at the real bulk/edge of the spectrum, which in particular establishes the local universality at strong non-Hermiticity.
Furthermore, we obtain the subleading corrections of the edge correlation kernels, which depend on the non-Hermiticity parameter contrary to the universal leading term. 
Our proofs are based on the asymptotic behaviour of the complex elliptic Ginibre ensemble due to Lee and Riser as well as on a version of the Christoffel-Darboux identity, a differential equation satisfied by the skew-orthogonal polynomial kernel. 
\end{abstract}

\maketitle


\section{Introduction and main results}

What has been understood about non-Hermitian random matrices with symplectic symmetry? 
To name a few, we should start with the celebrated work of Ginibre \cite{ginibre1965statistical}. 
There, he introduced Gaussian random matrices with quaternion entries (now known as the symplectic Ginibre ensemble) and derived the joint probability distribution function $\Prob_N$ for their eigenvalues $\boldsymbol{\zeta}=(\zeta_1,\cdots,\zeta_N) \in \C^N$.
It is given by
\begin{align} \label{Gibbs}
\begin{split}
d\Prob _N(\boldsymbol{\zeta}) &= \frac{1}{Z_N} \prod_{j>k=1}^{N} \abs{\zeta_j-\zeta_k}^2 \abs{\zeta_j-\overline{\zeta}_k}^2 \prod_{j=1}^{N} \abs{\zeta_j-\overline{\zeta}_j}^2 e^{ -N  Q(\zeta_j) } \,  dA(\zeta_j), 
\end{split}
\end{align}
where $Z_N$ is the partition function and $dA(\zeta):=\tfrac{1}{\pi}d^2\zeta$.
Here $Q(\zeta)=|\zeta|^2$, but one may consider an arbitrary potential $Q: \C \to \R$ as long as it satisfies suitable potential theoretic assumptions. 
From the statistical physics point of view, this eigenvalue distribution \eqref{Gibbs} can be interpreted as a two-dimensional Coulomb gas with an additional complex conjugation symmetry, see e.g.\ \cite[Section 15.9]{forrester2010log} and \cite{MR3458536,kiessling1999note}.

Regarding the macroscopic (global) properties of the symplectic Ginibre ensemble, it is well known that as $N \to \infty$, the eigenvalues tend to be uniformly distributed on the centered disc with radius $\sqrt{2}$, known as the circular law. 
Such a statement was extended to the ensemble \eqref{Gibbs} with general $Q$ in \cite{MR2934715}, where it was shown that as $N \to \infty$, the system tends to minimise the weighted logarithmic energy functional \cite{ST97}, the continuum limit of its discrete Hamiltonian. 
In particular, in the large system, the eigenvalues condense in a compact set $S \subset \C$ called the droplet. 
Moreover, due to a well-known fact from the logarithmic potential theory \cite{ST97}, it follows that the density inside the droplet with respect to the area measure $dA$ is given by $\Delta Q/2$, where we use the convention $\Delta=\pa \bp$. 

Turning to the microscopic (local) properties, contrary to the complex Ginibre ensemble whose bulk scaling limit was obtained already in Ginibre's original work \cite{ginibre1965statistical}, the equivalent level of analysis for the symplectic Ginibre ensemble was obtained long after. 
To our knowledge, it first appeared in the second edition of Mehta's book \cite{Mehta}, where the bulk scaling limit was derived at the origin of the spectrum.
More precisely, it is known that the particle system \eqref{Gibbs} forms a Pfaffian point process whose correlation functions can be expressed in terms of a certain $2\times2$ matrix-valued kernel (see \eqref{bfR Pfa} below), and the large-$N$ limit of the kernel was computed in \cite{Mehta}. (See also \cite{MR3066113} for the scaling limits of the products of Ginibre matrices at the singular point, the origin.)

Beyond the radially symmetric potentials, an elliptic generalisation of the symplectic Ginibre ensemble was studied by Kanzieper \cite{MR1928853}.  
Such an extension is called the \emph{elliptic Ginibre ensemble} and is associated with the potential
\begin{equation} \label{Q elliptic}
Q(\zeta):=\tfrac{1}{1-\tau^2} ( |\zeta|^2-\tau \re \zeta^2 ), \qquad \tau \in [0,1). 
\end{equation}
(cf. see \cite{katori2019two} for a different type of elliptic extension of planar point processes.)
The associated droplet $S$ is given by the elliptic disc
\begin{equation} \label{droplet}
S:= \{ x+iy \in \C \, | \, ( \tfrac{x}{\sqrt{2}(1+\tau)} )^2+( \tfrac{y}{\sqrt{2}(1-\tau)} )^2 \le 1 \},
\end{equation}
which is also known as the elliptic law, see the top part of
Figure~\ref{Fig. QEG}. 
In \cite{MR1928853}, Kanzieper introduced the formalism of skew-orthogonal polynomials to construct the correlation kernel and showed that the elliptic Ginibre ensemble can be exactly solved using the Hermite polynomials. 
Furthermore, based on this formalism, he obtained the scaling limits at the origin in the maximally non-Hermitian regime where $\tau=0$ as well as in the almost-Hermitian regime where $1-\tau=\OO(N^{-1})$. 
The analogous result for any fixed $\tau \in [0,1)$ was obtained in \cite{akemann2021skew}. We also refer to \cite{MR2180006} for the chiral counterpart of such a result, which can be solved using the generalised Laguerre polynomials.

\begin{figure}[h!]
		\begin{center}	
			\includegraphics[width=0.6\textwidth]{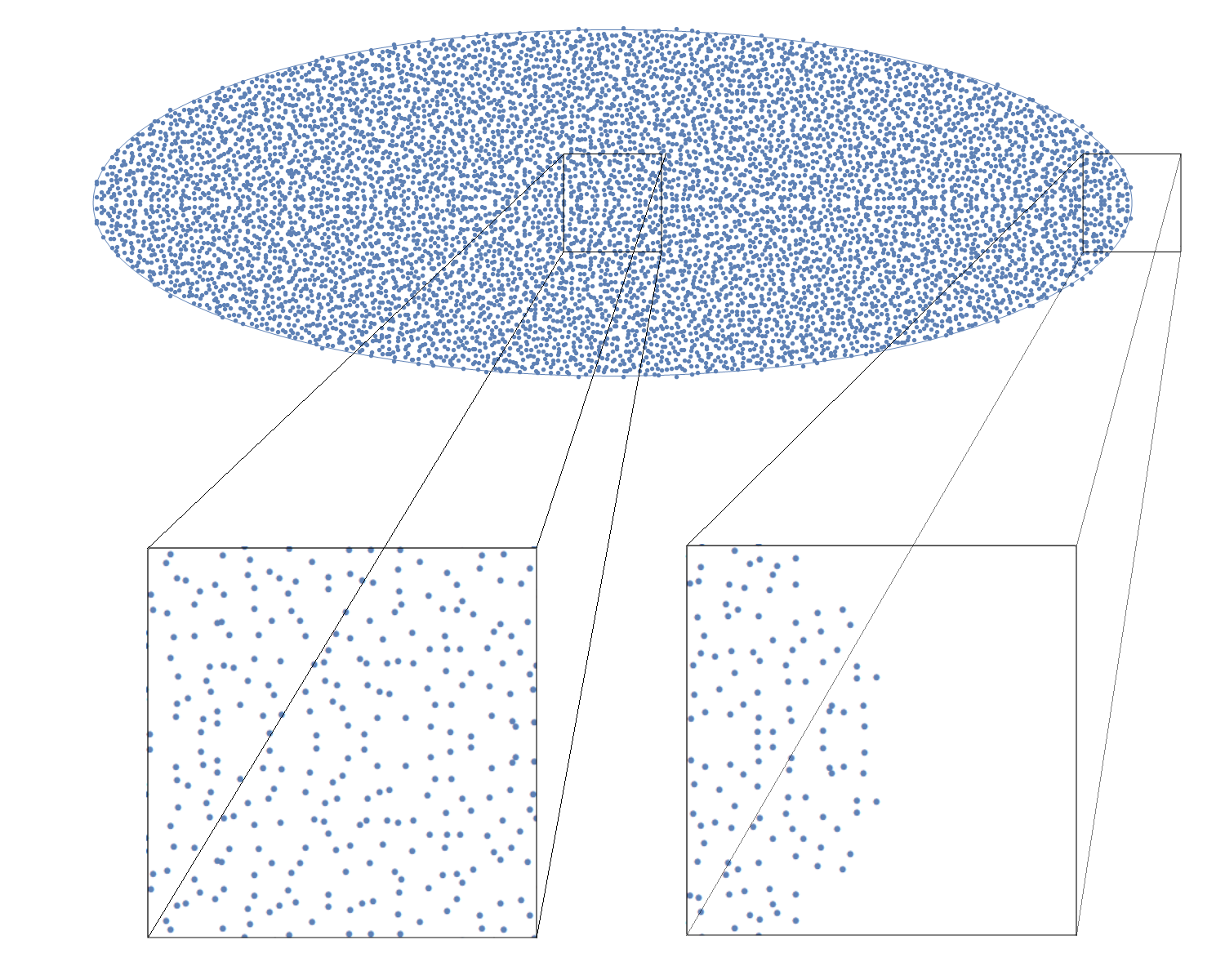}
		\end{center}
	\caption{A single sample of eigenvalues of the elliptic Ginibre ensemble. Here, $N=4096$ and $\tau=1/2$.} \label{Fig. QEG} 
\end{figure}

While the above-mentioned works mainly focused on various scaling limits at the origin, recently, there have been several studies on the scaling limits of the symplectic ensembles away from the origin as well. 
One remarkable result in this direction was due to Akemann, Kieburg, Mielke, and Prosen \cite{akemann2019universal}, where the authors showed that away from the real axis, the eigenvalue statistics of the symplectic and complex Ginibre ensembles are equivalent in the large-$N$ limit, see also \cite{MR1986426} for a similar statement on the fluctuations of the maximal modulus.
Moreover very recently, the edge scaling limit of the symplectic Ginibre ensemble at the right/left endpoint of the spectrum was obtained independently in \cite{akemann2021scaling} and \cite{BL2,Lysychkin}. 
These works can be thought of as the maximally non-Hermitian analogue of the previous work \cite{MR3192169}, which investigated the edge scaling limits of the elliptic Ginibre ensemble in the almost-Hermitian regime where $1-\tau=\OO(N^{-\frac13})$.
However at strong non-Hermiticity with general (fixed) $\tau \in [0,1)$, the scaling limits along the real axis as well as their convergence rates have not been discovered yet and we aim to contribute to these problems.  
(After this work, the derivation of scaling limits at weak non-Hermiticity was extended to the entire bulk and edge along the real axis in \cite{byun2021wronskian}.)

\medskip 

Let us now be more precise in introducing our main results.
We consider the matrix model
\begin{equation} \label{Matrix Model}
    X = \sqrt{1 + \tau} \, S + \sqrt{1 - \tau} \, T, \qquad
    S = \frac{1}{2}\Big( G_1 + G_1^{\dagger} \Big),\ T = \frac{1}{2}\Big( G_2 - G_2^{\dagger} \Big)
\end{equation}
where $G^{\dagger}$ denotes the conjugate transpose and $G_1$ and $G_2$ are independent random matrices from the symplectic Ginibre ensemble (see \cite{ginibre1965statistical} and \cite[Section~15.2]{Mehta}), i.e.\ they have the form
\begin{equation*}
    G = \begin{pmatrix}
        A & B \\
        -\overline{B} & \overline{A}
    \end{pmatrix} \in \C^{2 N \times 2 N}, \qquad
    A_{ij}, B_{ij} \sim \calN\Big(0, \frac{1}{2 N}\Big) + i \calN\Big(0, \frac{1}{2 N}\Big)
\end{equation*}
Note that due to this form our matrix $X$ has $2N$ eigenvalues that come in complex conjugate pairs. 
However, only one value from each pair is needed to characterise the spectrum of $X$.
We used a single realisation of such a matrix $X$ to create Figure~\ref{Fig. QEG} where we plotted all $2N$ eigenvalues.

The matrix probability density is given by
\begin{equation*}
    d\Prob(X) = \exp\Big( -\frac{N}{1-\tau^2} \big[ \Tr(X X^{\dagger}) - \tau \re \Tr(X^2) \big] \Big) \, dX.
\end{equation*}
Then diagonalizing $X$ as in Ginibre's paper \cite{ginibre1965statistical} or writing it in Schur form (compare \cite[Appendix A.33]{Mehta}) we can derive the eigenvalue distribution \eqref{Gibbs} with \eqref{Q elliptic}.
We remark that for general potentials $Q$ it is not clear how to construct a matrix model that leads to the eigenvalue distribution \eqref{Gibbs}.
Apart from the symplectic elliptic Ginibre ensemble and its chiral version \cite{MR2180006} other known cases include the truncated unitary symplectic ensemble \cite{BL} and products of symplectic Ginibre matrices \cite{MR3066113}.

In the sequel, we shall focus on the ensemble \eqref{Gibbs} with the elliptic potential \eqref{Q elliptic}. 
It is well known (see e.g.\ \cite{MR1928853}) that the $k$-point correlation function
\begin{equation}
\bfR_{N,k}(\zeta_1,\cdots, \zeta_k) := \frac{N!}{(N-k)!} \int_{\C^{N-k}} \Prob _N(\zeta_1,\dots,\zeta_N) \prod_{j=k}^N \, dA(\zeta_j) 
\end{equation}
has the structure
\begin{equation} \label{bfR Pfa}
\bfR_{N,k}(\zeta_1,\cdots, \zeta_k) =\prod_{j=1}^{k} (\overline{\zeta}_j-\zeta_j)  \Pf \Big[ 
e^{ -NQ(\zeta_j)/2-NQ(\zeta_l)/2 } 
\begin{pmatrix} 
\bfkappa_N(\zeta_j,\zeta_l) & \bfkappa_N(\zeta_j,\bar{\zeta}_l)
\smallskip 
\\
\bfkappa_N(\bar{\zeta}_j,\zeta_l) & \bfkappa_N(\bar{\zeta}_j,\bar{\zeta}_l) 
\end{pmatrix}  \Big]_{ j,l=1,\cdots, k }.
\end{equation}
Here the two-variable function $\bfkappa_N$ is called the \emph{(skew) pre-kernel}. 

To describe the local statistics at a given point $p \in \C$, it is convenient to define the rescaled point process $\boldsymbol{z}=(z_1,\dots,z_N)$ as
\begin{equation} \label{rescaling}
z_j:=e^{-i \theta}\sqrt{N \delta} \cdot (\zeta_j-p), \qquad 
\delta:=\dfrac{\Delta Q(p)}{2}=\frac{1}{2(1-\tau^2)},
\end{equation}
where $\theta\in\R$ is the angle of the outward normal direction at the boundary if $p\in\partial S$ (and otherwise $\theta=0$). 
Here, the rescaling factor $\sqrt{N \delta}$ is chosen according to the macroscopic density of the ensemble at the point $p$. 
See Figure~\ref{Fig. QEG} for an illustration of the rescaled process.
The correlation function $R_{N,k}$ of the process $\boldsymbol{z}$ is given by 
\begin{align}
\begin{split}
R_{N,k}(z_1,\cdots, z_k)&:=\frac{1}{(N\delta)^{k}  } \bfR_{N,k}(\zeta_1,\cdots,\zeta_k).
\end{split}
\end{align}
(In the end, this rescaling makes $R_{N,1}$ close to $1$ in the complex bulk.)
In particular, if $p \in \R$, it can be written as 
\begin{align}
\begin{split}
R_{N,k}(z_1,\cdots, z_k) &=\prod_{j=1}^{k} (\bar{z}_j-z_j)  \Pf \Big[ e^{ -\frac{N}{2} ( Q( p+\frac{z_j}{\sqrt{N \delta}} )+Q( p+ \frac{z_l}{\sqrt{N \delta}} ) ) } \begin{pmatrix} 
\kappa_N(z_j,z_l) & \kappa_N(z_j,\bar{z}_l) \\
\kappa_N(\bar{z}_j,z_l) & \kappa_N(\bar{z}_j,\bar{z}_l) 
\end{pmatrix}  \Big]_{ j,l=1,\cdots k },
\end{split}
\end{align}
where the rescaled pre-kernel $\kappa_N$ reads 
\begin{align} \label{kappa rescaling}
\begin{split}
\kappa_N(z,w) :=\frac{1}{ (N \delta)^{ \frac32 } } \bfkappa_N(\zeta,\eta).
\end{split}
\end{align}
In particular, the one-point function $R_N \equiv R_{N,1}$ for the rescaled point process has the form 
\begin{equation}
R_N(z) =(\bar{z}-z) e^{-N Q( p+\tfrac{z}{\sqrt{N \delta}} ) } \, \kappa_N(z,\bar{z}).
\end{equation}

Recall that the $k$-th Hermite polynomial $H_k$ is given by
\begin{equation}
H_k(z):=(-1)^k e^{z^2} \frac{d^k}{dz^k} e^{-z^2}=k! \sum_{m=0}^{ \lfloor k/2 \rfloor } \frac{(-1)^m}{ m! (k-2m)! } (2z)^{k-2m}. 
\end{equation}
Then it follows from \cite{MR1928853} that the pre-kernel $\kappa_N$ has the canonical representation
\begin{align}
\begin{split}
 \label{kappaN cano}
\kappa_N(z,w)&=  \sqrt{2}(1+\tau)  \sum_{k=0}^{N-1}  \frac{ (\tau/2)^{k+\frac12} }{(2k+1)!!}   H_{2k+1} \Big(  \sqrt{ \tfrac{N}{2\tau} } p+\sqrt{\tfrac{1-\tau^2}{\tau}}z \Big)  
 \sum_{l=0}^k  \frac{(\tau/2)^l}{(2l)!!} H_{2l}  \Big(  \sqrt{ \tfrac{N}{2\tau} } p+\sqrt{\tfrac{1-\tau^2}{\tau}}w  \Big)  
\\
&\quad - \sqrt{2}(1+\tau)  \sum_{k=0}^{N-1}  \frac{ (\tau/2)^{k+\frac12} }{(2k+1)!!}   H_{2k+1} \Big(  \sqrt{ \tfrac{N}{2\tau} } p+\sqrt{\tfrac{1-\tau^2}{\tau}}w \Big)  
 \sum_{l=0}^k  \frac{(\tau/2)^l}{(2l)!!} H_{2l}  \Big(  \sqrt{ \tfrac{N}{2\tau} } p+\sqrt{\tfrac{1-\tau^2}{\tau}}z  \Big).  
\end{split}
\end{align}
See Subsection~\ref{Subsec_SOP} for further details.

In our first result Proposition~\ref{Prop_ODE tau}, we obtain a differential equation satisfied by the pre-kernel $\kappa_N,$ which can be recognised as a version of the Christoffel-Darboux formula, cf. \cite{MR3450566,akemann2021scaling,MR1917675,byun2021lemniscate}. 

\begin{prop} \label{Prop_ODE tau}
For each $N$ and for any $p \in \C$, the (canonical) pre-kernel $\kappa_N$ satisfies the differential equation
\begin{align} \label{ODE tau}
\begin{split}
\partial_z \kappa_N(z,w)&= \Big( \sqrt{ \tfrac{2(1-\tau) N}{1+\tau} } \, p+2(1-\tau)z\Big)  \kappa_N(z,w)
\\
&\quad +  2\sqrt{1-\tau^2} \, \sum_{k=0}^{2N-1}  \frac{ (\tau/2)^{k} }{k!}   H_{k}\Big( \sqrt{ \tfrac{N}{2\tau} } p+\sqrt{\tfrac{1-\tau^2}{\tau}}z \Big) H_{k}  \Big( \sqrt{ \tfrac{N}{2\tau} } p+\sqrt{\tfrac{1-\tau^2}{\tau}}w \Big) 
\\
&\quad -  2\sqrt{1-\tau^2}   \frac{ (\tau/2)^{N} }{(2N-1)!!}  H_{2N}  \Big( \sqrt{ \tfrac{N}{2\tau} } p+\sqrt{\tfrac{1-\tau^2}{\tau}}z \Big)  \sum_{l=0}^{N-1}  \frac{(\tau/2)^l}{(2l)!!} H_{2l}  \Big( \sqrt{ \tfrac{N}{2\tau} } p+\sqrt{\tfrac{1-\tau^2}{\tau}}w \Big) .
\end{split}
\end{align}
\end{prop}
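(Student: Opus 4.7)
The proof amounts to a direct computation from the canonical representation \eqref{kappaN cano}. Let me introduce the abbreviation $c := \sqrt{(1-\tau^2)/\tau}$ and the Hermite arguments $u := \sqrt{N/(2\tau)}\,p + cz$, $v := \sqrt{N/(2\tau)}\,p + cw$. A short check gives $\sqrt{2(1-\tau)N/(1+\tau)}\,p + 2(1-\tau)z = \lambda u$ with $\lambda := 2\tau c/(1+\tau)$, so the claim \eqref{ODE tau} is equivalent to expressing $(\partial_z - \lambda u)\kappa_N(z,w)$ as the last two lines of the target.

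The workhorse is the identity
\[
(\partial_z - \lambda u)\, H_j(u) \;=\; \tfrac{c}{1+\tau}\Bigl[\,2j\,H_{j-1}(u) - \tau\,H_{j+1}(u)\,\Bigr],
\]
obtained from $H_j'(x) = 2jH_{j-1}(x)$, the three-term recurrence $2xH_j(x) = H_{j+1}(x) + 2jH_{j-1}(x)$, and the definition of $\lambda$. Writing $a_k := (\tau/2)^{k+1/2}/(2k+1)!!$, $b_l := (\tau/2)^l/(2l)!!$, $S_k(v) := \sum_{l=0}^{k} b_l H_{2l}(v)$, and $T_k(u) := \sum_{l=0}^{k-1} b_l H_{2l+1}(u)$, I would apply this identity to every Hermite factor in \eqref{kappaN cano}. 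The factors $H_{2k+1}(u)$ in the first line produce terms in $H_{2k}(u)S_k(v)$ and $H_{2k+2}(u)S_k(v)$, while the factors $H_{2l}(u)$ inside $S_k(u)$ in the second line give rise to a sum that telescopes via $T_{k+1}(u) - T_k(u) = b_k H_{2k+1}(u)$, yielding a diagonal piece $\tau a_k b_k\,H_{2k+1}(u)H_{2k+1}(v)$. Using $(2k+1)!!(2k)!! = (2k+1)!$, this piece matches precisely the odd-indexed part of the Mehler-type sum on the right-hand side of \eqref{ODE tau}.

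For the remaining $H_{2k}(u) S_k(v)$ and $H_{2k+2}(u) S_k(v)$ contributions, I would perform an Abel-type summation. Shifting $k \mapsto k-1$ in the $H_{2k+2}(u)$ sum and invoking the coefficient recurrence
\[
\tau a_{k-1} \;=\; 2(2k+1) a_k,
\]
an immediate consequence of $(2k+1)!! = (2k+1)(2k-1)!!$, collapses the difference to
\[
\sum_{k=0}^{N-1} \tau a_{k-1} b_k\, H_{2k}(u) H_{2k}(v) \;-\; \tau a_{N-1}\, H_{2N}(u)\, S_{N-1}(v),
\]
after using $S_k(v) - S_{k-1}(v) = b_k H_{2k}(v)$. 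The factorial identity $(2k)!!(2k-1)!! = (2k)!$ then makes all constants align: one checks $\sqrt{2}\,c\,\tau a_{k-1} b_k = 2\sqrt{1-\tau^2}\,(\tau/2)^{2k}/(2k)!$, so the first sum is exactly the even-indexed part of the Mehler sum, and the second assembles into the desired boundary term in the last line of \eqref{ODE tau}.

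The main obstacle is precisely this rearrangement of the nested triangular sums in \eqref{kappaN cano} into a single diagonal Mehler-type sum $\sum_{k=0}^{2N-1}(\tau/2)^k H_k(u)H_k(v)/k!$. The three ingredients that unlock it are the coefficient recurrence $\tau a_{k-1} = 2(2k+1) a_k$, the elementary telescoping identities for the partial sums $S_k$ and $T_k$, and the factorial identities $(2k)!!(2k-1)!! = (2k)!$ and $(2k+1)!!(2k)!! = (2k+1)!$. Once these are in hand, the rest is routine bookkeeping of powers of $\tau/2$ and the overall prefactor $\sqrt{2}(1+\tau)$.
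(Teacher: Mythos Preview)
Your proposal is correct and follows essentially the same route as the paper: both proofs are direct computations from \eqref{kappaN cano} using the Hermite differentiation rule and three-term recurrence, followed by index shifts and telescoping to extract the diagonal Mehler-type sum and the boundary term. Your packaging via the operator $(\partial_z-\lambda u)$ and the notations $a_k,b_l,S_k,T_k$ streamlines the bookkeeping somewhat, but the underlying identities (in particular $\tau a_{k-1}=2(2k+1)a_k$ and $(2k)!!(2k-1)!!=(2k)!$) and the Abel-summation step are exactly those the paper uses when treating $\partial_z F(z,w)$ and $\partial_z F(w,z)$ separately.
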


\begin{rmk*}
   It follows from 
	\begin{equation*}
	( \tfrac{\tau}{2} )^{k/2}  H_{k}\Big( \sqrt{ \tfrac{N}{2\tau} } p+\sqrt{\tfrac{1-\tau^2}{\tau}}z \Big) \sim ( \sqrt{N}p + \sqrt{2}z )^{k} ,\qquad (\tau \to 0)
	\end{equation*}
	that for $\tau=0$, the equation \eqref{ODE tau} reduces to 
    \begin{align} \label{ODE tau0}
	\begin{split}
	\partial_z \kappa_N(z,w)&= (\sqrt{2N}p + 2z) \kappa_N(z,w)
	\\
	&\quad + 2 \sum_{k=0}^{2N-1} \frac{(\sqrt{N}p + \sqrt{2}z)^{k} (\sqrt{N}p + \sqrt{2}w)^{k}}{k!}-2 \frac{(\sqrt{N}p + \sqrt{2}z)^{2N}}{(2N-1)!!} \sum_{l=0}^{N-1} \frac{(\sqrt{N}p + \sqrt{2}w)^{2l}}{(2l)!!}. 
	\end{split}
	\end{align}
	This differential equation was utilized in \cite{akemann2021scaling}. See also \cite{MR1762659} for a related statement in the other extremal case when $\tau=1$. 
    
    It is worth pointing out that the inhomogeneous term in the second line of \eqref{ODE tau} corresponds to the (holomorphic) kernel of the complex elliptic Ginibre ensemble with dimension $2N$. Such a relation has been observed in other models as well, which include the Laguerre \cite{MR2180006,osborn2004universal,MR4229527} and the Mittag-Leffler ensembles \cite{akemann2021scaling,ameur2018random,chau1998structure}.   
    
	Let us remark that the counterpart of such a differential equation for the kernel of the complex elliptic Ginibre ensemble was obtained in \cite[Proposition 2.3]{MR3450566}. (See also \cite[Subsection 3.1]{byun2021lemniscate} for an alternative derivation in a more general framework.)
	As it allows to perform a suitable asymptotic analysis, this formula has turned out to be very useful in various situations, see e.g.\ \cite{AB21,byun2021real,riser2013universality}. 
	In a similar spirit, we emphasise that Proposition~\ref{Prop_ODE tau} can also be applied to other cases including the almost-Hermitian regime, see \cite{byun2021wronskian}. 
	(We also mention that a different approach for the complex elliptic Ginibre ensemble was recently developed in \cite{akemann2022elliptic}.)
\end{rmk*}

In our main result Theorem~\ref{Thm_local ell} below, we establish the large-$N$ asymptotic of the kernel. This is based on the asymptotic analysis of the differential equation \eqref{ODE tau}, see Corollary~\ref{Cor_ODE transform} and Proposition~\ref{Prop_rN}. 
Recall that by \eqref{droplet}, we have $S \cap \R=[-\sqrt{2}(1+\tau),\sqrt{2}(1+\tau)].$

\begin{thm} \label{Thm_local ell}
Let $\tau \in [0, 1)$ and $p \in \R$. 
Then there exists a pre-kernel $\wt{\kappa}_N$ such that 
\begin{equation}
R_{N,k}(z_1,\cdots, z_k) =\prod_{j=1}^{k} (\bar{z}_j-z_j) \, \Pf \Big[ e^{-|z|^2-|w|^2} \begin{pmatrix} 
\wt{\kappa}_N(z_j,z_l) & \wt{\kappa}_N(z_j,\bar{z}_l) 
\smallskip 
\\
\wt{\kappa}_N(\bar{z}_j,z_l) & \wt{\kappa}_N(\bar{z}_j,\bar{z}_l) 
\end{pmatrix}  \Big]_{ j,l=1,\cdots k }
\end{equation}
and that satisfies the following asymptotic behaviours as $N \to \infty$.
\begin{itemize}
    \item \textbf{\textup{(The real bulk)}} For $|p|<\sqrt{2}(1+\tau)$, there exists an $\eps>0$ such that
    \begin{equation}
    \wt{\kappa}_N(z,w) = \kappa^{\R}_{\textup{bulk}}(z,w) + \OO(e^{-N\eps}),
    \end{equation}
    where 
    \begin{equation} \label{kappa bulk}
    \kappa^{\R}_{\textup{bulk}}(z,w) := \sqrt{\pi} e^{z^2+w^2} \erf(z-w).
    \end{equation}
    \item \textbf{\textup{(The real edge)}} For $p = \pm \sqrt{2}(1+\tau)$ and an arbitrary $\eps > 0$ it holds
    \begin{equation}
    \wt{\kappa}_N(z,w) = \kappa^{\R}_{\textup{edge}}(z,w) + \tfrac{1}{\sqrt{N}} \kappa^{\R,1/2}_{\textup{edge}}(z,w) + \OO(N^{-1+\eps}),
    \end{equation}
    where 
    \begin{equation} \label{kappa edge}
    \kappa^{\R}_{\textup{edge}}(z,w) := e^{2zw} \int_{-\infty}^{0} e^{-s^2}\sinh(2s(w-z))\erfc(z+w-s) \, ds
    \end{equation}
    and 
    \begin{align} \label{kappa edge sub}
    \begin{split}
    \kappa^{\R,1/2}_{\textup{edge}}(z,w) &:= \tfrac{1}{12 \sqrt{2}} (\tfrac{1+\tau}{1-\tau})^{3/2}\, e^{z^2+w^2}  \\
    &\quad \times \Big[ \Big( (2z^2+\tfrac{1-2\tau}{1+\tau})e^{-2z^2}\erfc(\sqrt{2}w) + 2\sqrt{\tfrac{2}{\pi}}w \, e^{-2(z^2+w^2)} \Big)-\Big(z \leftrightarrow w\Big)\Big].
    \end{split}
    \end{align}
    \smallskip 
    \item \textbf{\textup{(The real axis outside the droplet)}}  For $|p|>\sqrt{2}(1+\tau)$, there exists an $\eps>0$ such that
    \begin{equation}
    \wt{\kappa}_N(z,w) = \OO(e^{-N\eps}).
    \end{equation}
\end{itemize}
In all three cases the error terms are uniform for $z, w$ in compact subsets of $\C$.
\end{thm}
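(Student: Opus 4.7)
The plan is to solve the differential equation \eqref{ODE tau} from Proposition~\ref{Prop_ODE tau} to obtain an integral representation for the pre-kernel, and then to carry out a case-by-case asymptotic analysis of that representation using Lee and Riser's expansion of the complex elliptic Ginibre kernel.

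\textbf{Gauge and integration.} First I would pass to a gauge-equivalent pre-kernel $\wt{\kappa}_N(z,w) = g_N(z) g_N(w) \kappa_N(z,w)$, with $g_N$ chosen so that after the rescaling \eqref{rescaling} the weight $e^{-NQ(\zeta_j)/2 - NQ(\zeta_l)/2}$ appearing in \eqref{bfR Pfa} becomes the canonical Gaussian weight $e^{-|z|^2-|w|^2}$ stated in the theorem. A direct computation shows that the logarithmic derivative $g_N'/g_N$ matches exactly the coefficient $-(\sqrt{2(1-\tau)N/(1+\tau)}\,p + 2(1-\tau)z)$ multiplying $\kappa_N$ on the first line of \eqref{ODE tau}, so in the new gauge the equation becomes purely inhomogeneous: $\partial_z \wt{\kappa}_N(z,w)$ equals $g_N(z) g_N(w)$ times the second and third lines of \eqref{ODE tau}. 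Integrating from a suitable base point in $z$, and using skew-symmetry $\wt{\kappa}_N(z,w) = -\wt{\kappa}_N(w,z)$ to fix the constant of integration, yields an explicit integral representation for $\wt{\kappa}_N$.

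\textbf{Reduction to the complex elliptic Ginibre kernel.} The key observation, already noted in the remark following Proposition~\ref{Prop_ODE tau}, is that the second line of \eqref{ODE tau} is (up to Gaussian weights) the holomorphic correlation kernel of the \emph{complex} elliptic Ginibre ensemble at dimension $2N$ with the same parameter $\tau$. Lee and Riser's real-axis asymptotic expansion therefore supplies the large-$N$ behaviour of the integrand: the standard Ginibre bulk kernel in the interior of the droplet, an $\erfc$-type edge kernel with an explicit $N^{-1/2}$ correction at the endpoints, and exponential decay outside. The third line of \eqref{ODE tau}, a standalone $H_{2N}$ factor times a truncated Mehler-type sum, serves as a boundary correction; it is analysed by combining Plancherel--Rotach asymptotics for $H_{2N}$ with the same Lee--Riser expansion applied to the partial sum.

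\textbf{Case analysis.} For $|p| < \sqrt{2}(1+\tau)$ the $H_{2N}$ boundary term is exponentially suppressed, and Gaussian integration in $z$ against the Ginibre bulk kernel reduces via a completing-the-square argument to the one-dimensional error-function integral producing \eqref{kappa bulk}. For $|p| > \sqrt{2}(1+\tau)$ every term in the ODE is exponentially small, and the integral representation immediately yields the stated exponential decay. At the real edge $p = \pm \sqrt{2}(1+\tau)$ the boundary term is of order one on the microscopic scale, and combining the leading Lee--Riser edge asymptotics with this boundary contribution gives, after integration, the closed form \eqref{kappa edge}. The subleading term \eqref{kappa edge sub} is then obtained by collecting the $N^{-1/2}$ contribution from the Lee--Riser subleading correction together with the $N^{-1/2}$ contribution from the Plancherel--Rotach expansion of $H_{2N}$, and evaluating the residual Gaussian--$\erfc$ integrals.

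\textbf{Principal obstacle.} The hardest part will be the edge analysis at subleading order: one must carry two distinct $N^{-1/2}$ contributions through the integrating-factor step and verify that their combination simplifies to the asymmetric-in-$(z,w)$ expression \eqref{kappa edge sub}, rather than to a more symmetric or vanishing quantity. A secondary point, which must be handled carefully to give uniform control on compact subsets of $\C$, is the choice of base point for the integrating factor: one needs $g_N$ to supply enough decay along the contour of integration for dominated convergence to apply, which requires choosing the base point inside the droplet (or at infinity along a ray in the direction of decay of $g_N$) rather than on the boundary.
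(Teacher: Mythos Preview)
Your overall strategy coincides with the paper's: transform the ODE of Proposition~\ref{Prop_ODE tau}, feed in Lee--Riser asymptotics for the complex elliptic Ginibre kernel (the second line of \eqref{ODE tau}) together with Plancherel--Rotach/strong asymptotics for the $H_{2N}$ boundary term (the third line), and then solve case by case. This is exactly the content of Corollary~\ref{Cor_ODE transform}, Proposition~\ref{Prop_rN}, and Subsection~\ref{Subsec_main thm proof}.

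There is, however, a genuine error in your gauge step. A holomorphic product gauge $g_N(z)g_N(w)$ with $g_N'/g_N=-\big(\sqrt{2(1-\tau)N/(1+\tau)}\,p+2(1-\tau)z\big)$ does \emph{not} convert the weight $e^{-NQ/2-NQ/2}$ into $e^{-|z|^2-|w|^2}$: carrying out the ``direct computation'' you allude to shows that $g_N(z)g_N(\bar z)$ and $e^{-NQ(p+z/\sqrt{N\delta})+2|z|^2}$ differ in their $\re z^2$ coefficients ($-2(1-\tau)$ versus $2\tau$), and the discrepancy $e^{-z^2-w^2}$ is not a cocycle. The paper therefore uses a \emph{non-product} gauge $\omega_N(z,w)$ containing the cross term $e^{-2zw}$; this achieves the weight conversion (up to a genuine cocycle $c_N$) but leaves a residual homogeneous piece $2(z-w)\widehat\kappa_N$ in the transformed ODE \eqref{ODE tau transformed with error terms}. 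Your ``purely inhomogeneous'' equation is precisely the paper's equation after applying the integrating factor $e^{-(z-w)^2}$, so the analyses are equivalent---but you must multiply by $e^{z^2+w^2}$ at the end to recover the $\wt\kappa_N$ of the theorem (this is why $e^{z^2+w^2}$ appears in \eqref{kappa bulk} and \eqref{kappa edge sub}).

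One smaller point: the constant of integration is fixed not by a base point ``inside the droplet or at infinity'' but simply by the skew-symmetry $\wt\kappa_N(w,w)=0$, so $z=w$ is the natural base point and no dominated-convergence issue along an unbounded contour arises (cf.\ Lemma~\ref{Lem_ODE rate} and the end of Subsection~\ref{Subsec_main thm proof}).
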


Since the leading terms of the pre-kernels are independent of $\tau$ and $p$ (for the real bulk case), Theorem~\ref{Thm_local ell} affirms the local bulk/edge universality along the real axis.
For the complex elliptic Ginibre ensemble, the analogous result on the fine asymptotic behaviours of the kernels were obtained by Lee and Riser \cite{MR3450566}. We also refer to \cite{MR2208159} for similar results on Gaussian symplectic ensemble when $\tau=1$.

While we have focused on the case $p \in \R$ in Theorem~\ref{Thm_local ell}, it would be also interesting to investigate scaling limits away from the real axis, i.e.\ $p \in \C \setminus \R.$ 
Intuitively, one can expect that for $p \in \C \setminus \R$, the local statistics of the symplectic and complex elliptic Ginibre ensemble are equivalent in the large-$N$ limit, see \cite{akemann2019universal,akemann2021scaling} for the discussion on the Ginibre ensembles $(\tau=0)$.  

\begin{figure}[hp!]
	\begin{subfigure}[h]{0.6\textwidth}
		\begin{center}
			\includegraphics[width=0.8\textwidth]{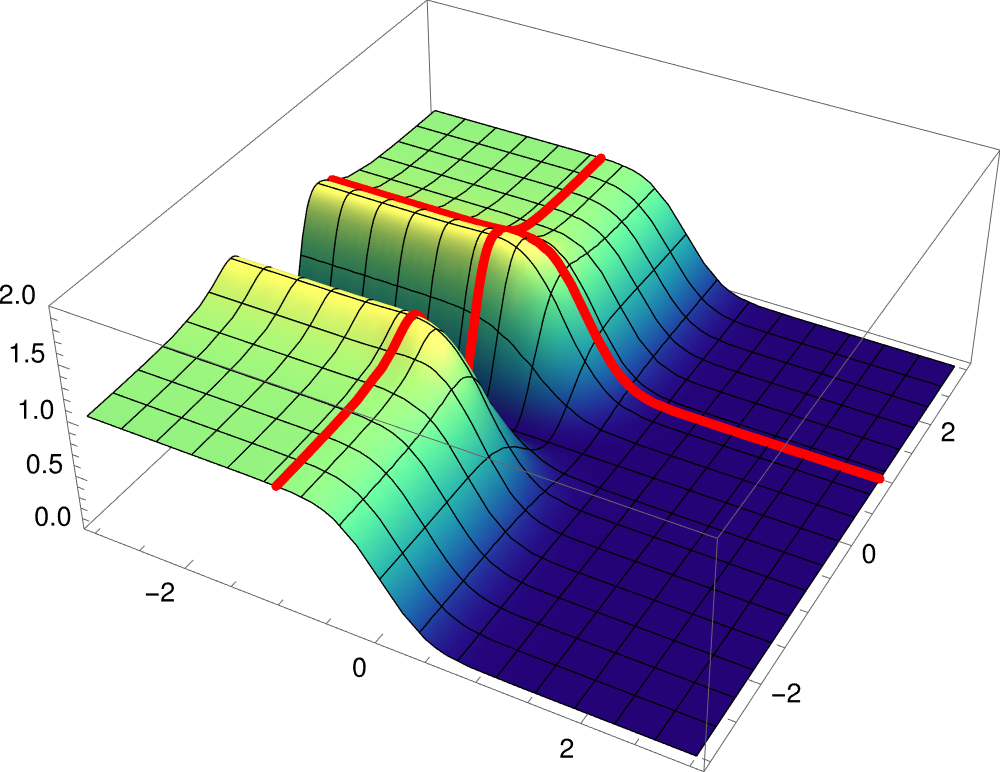}
		\end{center} \subcaption{$R(z)$ for $p$ at the edge}
	\end{subfigure}
	
	\vspace{1em}
	
	\begin{subfigure}{0.48\textwidth}
		\begin{center}
			\includegraphics[width=0.8\textwidth]{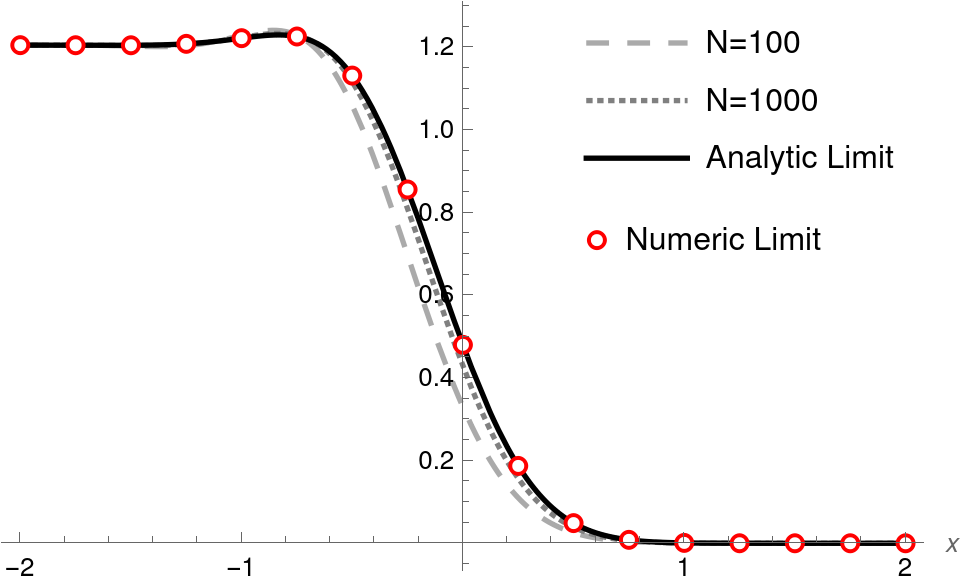}
		\end{center}
		\subcaption{$R_N(z)$ when $\im z=1$}
	\end{subfigure}	
	\begin{subfigure}[h]{0.48\textwidth}
		\begin{center}
			\includegraphics[width=0.8\textwidth]{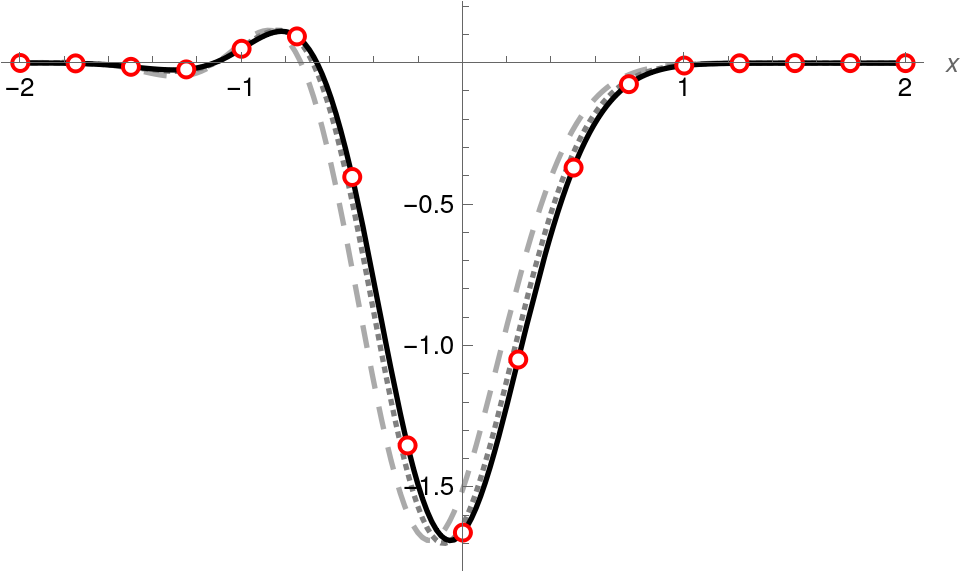}
		\end{center}
		\subcaption{$\sqrt{N}(R_N(z)-R(z))$ when $\im z=1$}
	\end{subfigure}
	
	\begin{subfigure}{0.48\textwidth}
	\begin{center}	
			\includegraphics[width=0.8\textwidth]{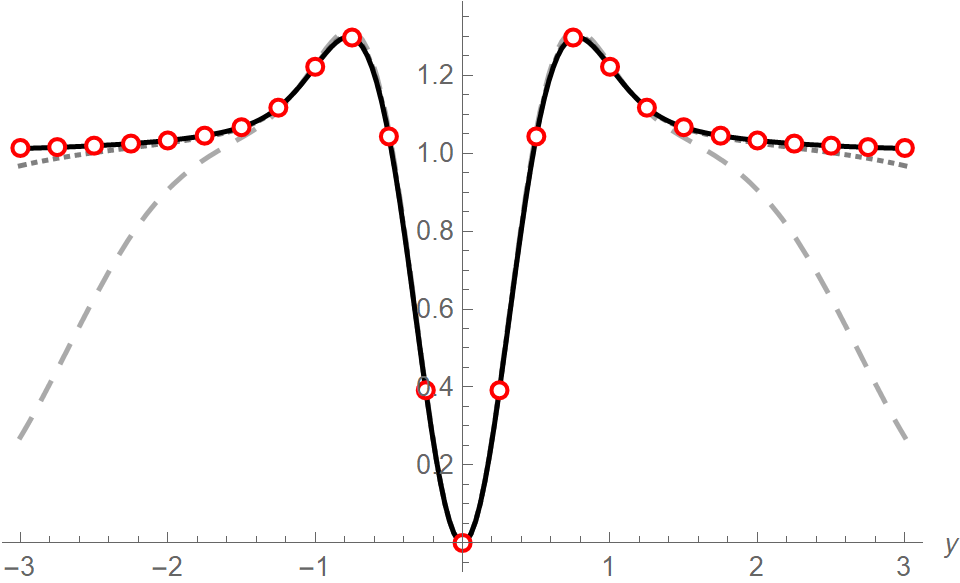}
		\end{center}
		\subcaption{$R_N(z)$ when $\re z=-1$}
	\end{subfigure}	
	\begin{subfigure}[h]{0.48\textwidth}
		\begin{center}
			\includegraphics[width=0.8\textwidth]{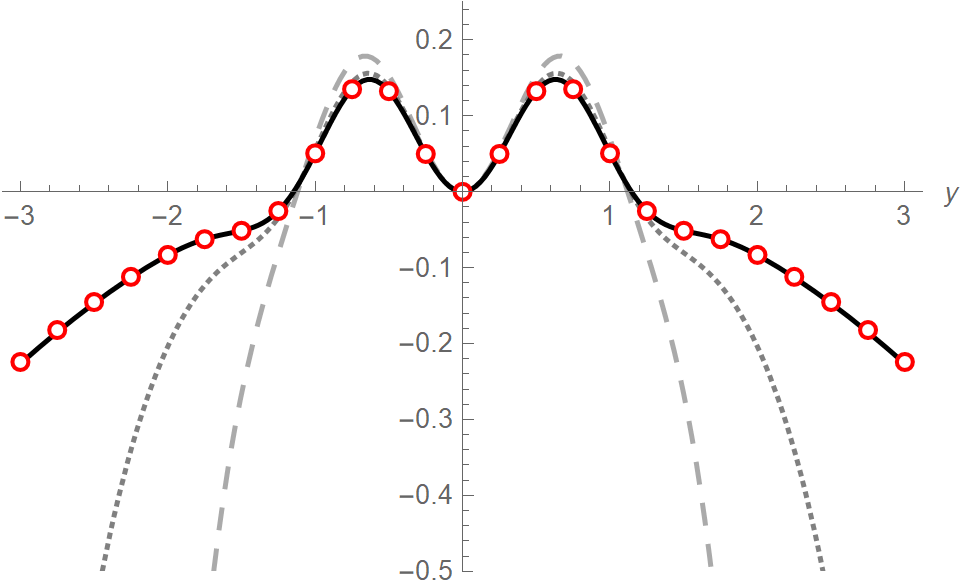}
		\end{center}
		\subcaption{$\sqrt{N}(R_N(z)-R(z))$ when $\re z=-1$}
	\end{subfigure}
	\caption{ The plot (A) shows the graph of the limiting microscopic density $R$ in \eqref{R edge}. 
	The plot (B) is the graph of $R_N$ and its comparison with $R$. The plot (C) shows the same graph for $\sqrt{N}(R_N-R)$ and $R^{(1/2)}$ in \eqref{R12 edge}. In both cases $\im z=1$.
	The method to obtain the numeric limit (red circles) is described in the main text.
	The plots (D),(E) are analogous figures along the cross-section $\re z=1$.
	For all plots we chose $\tau=1/3$. 
	}
	\label{Fig. Density convergence at edge}
\end{figure}

\begin{rmk*}
As noted in \cite{akemann2021scaling}, the limiting pre-kernels in Theorem~\ref{Thm_local ell} can be written in the following unified way
\begin{equation}  \label{kappa lim}
\kappa_a^\R(z,w) :=\frac{ e^{z^2+w^2}   }{\sqrt{2}} \int_{-\infty}^a  e^{-2(z-u)^2  } \erfc ( \sqrt{2}(w-u) )-(z \leftrightarrow w) \, du,	\qquad a= 
\begin{cases}
\infty   &\textup{if} \quad |p| < \sqrt{2}(1+\tau),
\smallskip
\\
0 &\textup{if} \quad |p| = \sqrt{2}(1+\tau),
\smallskip 
\\
-\infty &\textup{if} \quad |p| > \sqrt{2}(1+\tau).
\end{cases}
\end{equation}
For an arbitrary $a \in \R$, the pre-kernel $\kappa_a^\R$ can be obtained by rescaling the process at the $N$-dependent point 
\begin{equation}
p=\pm \sqrt{2}(1+\tau) \mp a \sqrt{ \tfrac{2(1-\tau^2)}{N} }.
\end{equation}
(See also \cite{akemann2021scaling,MR4030288} for further motivations on zooming the point process at a moving location.)
\end{rmk*}

As an immediate consequence of Theorem~\ref{Thm_local ell}, we obtain that the density $R_N$ when $N \to \infty$ for $p$ in the bulk along the real line is given by 
\begin{equation}
R_{N}(x+iy) = 4 y F(2y) + \OO(e^{-N\eps}), \qquad F(z):=e^{-z^2} \int_0^z e^{t^2}\,dt.
\end{equation}
Here $F$ is Dawson's integral function.
We mention that the limiting $1$-point function $4y F(2y)$ for the bulk case is balanced in a sense that
\begin{equation} \label{bulk limit balanced}
\int_\R (4y F(2y)-1) \,dy=0. 
\end{equation}
(This easily follows from $ F(2y)/2= \int 1- 4y F(2y) \,dy. $)
The relation \eqref{bulk limit balanced} has a physical interpretation: the depletion of eigenvalues that happens close to the real axis is exactly compensated by the excess of eigenvalues in the hill, see Figure~\ref{Fig. Bulkbalanced}.

\begin{figure}[h!]
		\begin{center}	
			\includegraphics[width=0.5\textwidth]{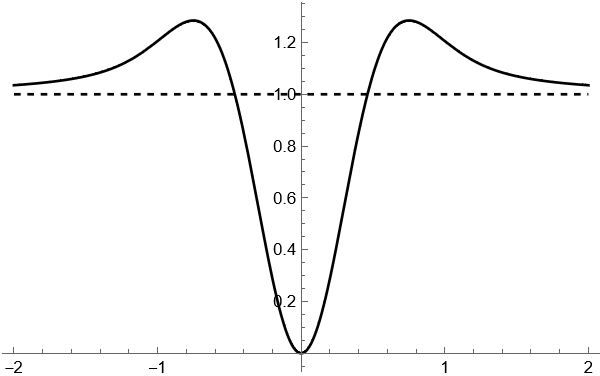}
		\end{center}
	\caption{ Graph of the function $4y F(2y)$ (full line) and its comparison with the constant function $1$ (dashed line). } \label{Fig. Bulkbalanced} 
\end{figure}

For $p$ at the real edge we have
\begin{equation}
R_N(z) = R(z) + \tfrac{1}{\sqrt{N}} R^{(1/2)}(z) + \OO(N^{-1+\eps}),
\end{equation}
where $\eps > 0$ is arbitrary,
\begin{equation} \label{R edge}
R(x+iy) = -2y\int_{-\infty}^{0}e^{-s^2}\sin(4sy)\erfc(2x-s) \, ds
\end{equation}
and 
\begin{equation}
\label{R12 edge}
 R^{(1/2)}(z) =  \tfrac{1}{3\sqrt{2}} (\tfrac{1+\tau}{1-\tau} )^{\frac32} \im(z) \, e^{-4 \im(z)^2}
\im\Big[ (2z^2+\tfrac{1-2\tau}{1+\tau})\erfc(\sqrt{2}\bar{z})\, e^{-2z^2} + 2\sqrt{\tfrac{2}{\pi}}\,\bar{z}\,e^{-4\re z^2} \Big].
\end{equation}
See Figure~\ref{Fig. Density convergence at edge} for illustrations of the edge density for finite $N$ and in the limit.

Since the first subleading term is of order $1/\sqrt{N}$, the convergence of $R_N(z)$ to the limit $R(z)$ is rather slow.
To estimate the limit from values at finite $N$ we use the following method: for given $z \in \C$ we first compute $R_N(z)$ for $N = 2000, 3000, 4000, 5000$ numerically, then we do a least squares fit of the function $N \mapsto a + b / \sqrt{N} + c / N$ to this data to estimate the constants $a, b, c$.
Since we know that the density has the series representation $R_N(z) = R(z) + R^{(1/2)}(z)/\sqrt{N} + R^{(1)}(z)/N + \cdots$ the fitted value of $a$ is our best guess for the limit $R(z)$.
We apply the same method to estimate the limit of the first correction $\sqrt{N}(R_N(z)-R(z))$.

We remark that for each $x$, as $y \to \infty,$
\begin{equation*}
R(x+iy) \sim \tfrac{1}{2}\erfc(2x), \qquad R^{(1/2)}(x+iy) \sim  -\tfrac{1}{\sqrt{\pi}}   (\tfrac{1+\tau}{1-\tau})^{ \frac32 } e^{-4x^2} y^2 .
\end{equation*}

\subsection*{Organisation of the paper} The rest of this paper is organised as follows. 
Section~\ref{sec:Kernel ODE} is devoted to recalling the skew-orthogonal polynomial formalism and to proving Proposition~\ref{Prop_ODE tau}.
In Section~\ref{sec:Strong Limit}, we perform the asymptotic analysis of the inhomogeneous terms in \eqref{ODE tau} (Proposition~\ref{Prop_rN}) and complete the proof of Theorem~\ref{Thm_local ell}. In Appendix~\ref{Appendix}, we show some preliminary estimates used in Section~\ref{sec:Strong Limit}.

\section{Generalised Christoffel-Darboux identity} \label{sec:Kernel ODE}

In this section, we derive a version of the Christoffel-Darboux formula.
For this we will use an explicit expression of the pre-kernels in terms of the Hermite polynomials.

\subsection{Skew-orthogonal polynomials} \label{Subsec_SOP}
Following \cite{MR1928853} we express the pre-kernel, and by extension all correlation functions, in terms of skew-orthogonal polynomials.
Define the skew-symmetric form
\begin{equation*}
\langle f, g \rangle_s := \int_{\C} \Big( f(\zeta) g(\bar{\zeta}) - g(\zeta) f(\bar{\zeta}) \Big) (\zeta - \bar{\zeta}) e^{-N Q(\zeta)} \,dA(\zeta).
\end{equation*}
Let $q_m$ be a family of monic polynomials of degree $m$ that satisfy the following skew-orthogonality conditions with positive skew-norms $r_k > 0$: for all $k, l \in \N$
\begin{equation}
\langle q_{2k}, q_{2l} \rangle_s = \langle q_{2k+1}, q_{2l+1} \rangle_s = 0, \qquad \langle q_{2k}, q_{2l+1} \rangle_s = -\langle q_{2l+1}, q_{2k} \rangle_s = r_k  \,\delta_{k, l}.
\end{equation}
Here $\delta_{k, l}$ is the Kronecker delta. 
Such polynomials exist if $\int \abs{\zeta}^m e^{-N Q(\zeta)}\, dA(\zeta) < \infty$ for all $m \in \N$, see e.g.\ \cite{akemann2021skew}.
Then the pre-kernel $\bfkappa_N$ is written as 
\begin{equation}\label{bfkappaN skewOP}
\bfkappa_N(\zeta,\eta)=\sum_{k=0}^{N-1} \frac{q_{2k+1}(\zeta) q_{2k}(\eta) -q_{2k}(\zeta) q_{2k+1}(\eta)}{r_k}.
\end{equation}

For the elliptic potential $Q$ in \eqref{Q elliptic}, the associated skew-orthogonal polynomials $q_k$ and their skew-norms $r_k$ are given by 
$$
q_{2k}(\zeta) = \Big( \frac{2}{N} \Big)^k k! \sum_{l=0}^{k}  \frac{(\tau/2)^l}{(2l)!!} H_{2l}  \Big( \sqrt{ \tfrac{N}{2\tau} } \zeta \Big), \qquad q_{2k+1}(\zeta) = \Big( \frac{\tau}{2N} \Big)^{k+\frac12} H_{2k+1}\Big( \sqrt{ \tfrac{N}{2\tau} } \zeta \Big),
$$
and 
$$
r_k = 2(1-\tau)^{3/2}(1+\tau)^{1/2}\frac{(2k+1)!}{N^{2k+2}},
$$
see \cite{MR1928853}.  
Thus the pre-kernel $\bfkappa_N$ of the ensemble $\boldsymbol{\zeta}$ has the expression
\begin{equation*}
\begin{split}
\bfkappa_N(\zeta,\eta)
&= \Big( \frac{N}{2(1-\tau)} \Big)^{ \frac{3}{2} } \sqrt{ \frac{\tau}{1+\tau} } \sum_{k=0}^{N-1}  \frac{ (\tau/2)^k }{(2k+1)!!} \sum_{l=0}^k  \frac{(\tau/2)^l}{(2l)!!} \\
&\quad \times \Big[H_{2k+1} \Big( \sqrt{ \tfrac{N}{2\tau} } \zeta \Big) H_{2l}  \Big( \sqrt{ \tfrac{N}{2\tau} } \eta \Big) - H_{2k+1} \Big( \sqrt{ \tfrac{N}{2\tau} } \eta \Big) H_{2l}  \Big( \sqrt{ \tfrac{N}{2\tau} } \zeta \Big) \Big].
\end{split}
\end{equation*}
Therefore by \eqref{kappa rescaling}, the rescaled pre-kernel $\kappa_N$ at the point $p \in \R$ is given by
\begin{align} \label{kappa F}
\kappa_N(z,w) = \sqrt{2}(1+\tau)  \Big( F(z,w)-F(w,z) \Big),
\end{align}
where 
\begin{equation}  \label{F in kappa}
F(z,w)
= \sum_{k=0}^{N-1}  \frac{ (\tau/2)^{k+\frac12} }{(2k+1)!!}   H_{2k+1} \Big( \sqrt{ \tfrac{N}{2\tau} } p+\sqrt{\tfrac{1-\tau^2}{\tau}}z  \Big)  
 \sum_{l=0}^k  \frac{(\tau/2)^l}{(2l)!!} H_{2l}  \Big( \sqrt{ \tfrac{N}{2\tau} } p+\sqrt{\tfrac{1-\tau^2}{\tau}}w \Big).
\end{equation}

\subsection{Derivation of a differential equation}

With the expression \eqref{F in kappa}, we show the Christoffel-Darboux identity (Proposition~\ref{Prop_ODE tau}) for the kernel. 

\begin{proof}[Proof of Proposition~\ref{Prop_ODE tau}] 
Throughout the proof, let us write 
\begin{equation} \label{zeta eta}
\zeta=  \sqrt{ \tfrac{N}{2\tau} } p+\sqrt{\tfrac{1-\tau^2}{\tau}}z, \qquad \eta= \sqrt{ \tfrac{N}{2\tau} } p+\sqrt{\tfrac{1-\tau^2}{\tau}}w
\end{equation}
to lighten notations. 
It is well known that the Hermite polynomial $H_j$ satisfies the following  differentiation rule and the  three term recurrence relation:  
\begin{equation} \label{three term}
H_j'(z)=2j H_{j-1}(z), \qquad 
 H_{j+1}(z)=2zH_j(z)-H_j'(z).
\end{equation}
Using these formulas, we have 
\begin{align*}
&\quad \partial_z \sum_{k=0}^{N-1}  \frac{ (\tau/2)^{k+\frac12} }{(2k+1)!!}   H_{2k+1} ( \zeta )  
\sum_{l=0}^k  \frac{(\tau/2)^l}{(2l)!!} H_{2l}  ( \eta )
\\
&= 2 \sqrt{\frac{1-\tau^2}{\tau}}  \sum_{k=0}^{N-1}  \frac{ (\tau/2)^{k+\frac12} }{(2k-1)!!}   H_{2k} ( \zeta )  
\sum_{l=0}^k  \frac{(\tau/2)^l}{(2l)!!} H_{2l}  ( \eta )
\\
&= \sqrt{\tau(1-\tau^2)}  \sum_{k=1}^{N-1}  \frac{ (\tau/2)^{k-\frac12} }{(2k-1)!!}   H_{2k} ( \zeta )  
\sum_{l=0}^k  \frac{(\tau/2)^l}{(2l)!!} H_{2l}  ( \eta )+\sqrt{2(1-\tau^2)}
\\
&= \sqrt{\tau(1-\tau^2)}  \sum_{k=1}^{N-1}  \frac{ (\tau/2)^{k-\frac12} }{(2k-1)!!}   ( 2 \zeta H_{2k-1}(\zeta)-H_{2k-1}'(\zeta) )
\sum_{l=0}^k  \frac{(\tau/2)^l}{(2l)!!} H_{2l}  ( \eta )+\sqrt{2(1-\tau^2)}.
\end{align*}
Thus we obtain 
\begin{align*}
&\quad \partial_z \sum_{k=0}^{N-1}  \frac{ (\tau/2)^{k+\frac12} }{(2k+1)!!}   H_{2k+1} ( \zeta ) \sum_{l=0}^k  \frac{(\tau/2)^l}{(2l)!!} H_{2l}  ( \eta )
\\
&=   2 \sqrt{\tau(1-\tau^2)} \, \zeta \sum_{k=0}^{N-2}  \frac{ (\tau/2)^{k+\frac12} }{(2k+1)!!}   H_{2k+1}(\zeta) \sum_{l=0}^{k+1}  \frac{(\tau/2)^l}{(2l)!!} H_{2l}  ( \eta )
\\
&\quad -\tau \partial_z \sum_{k=0}^{N-2}  \frac{ (\tau/2)^{k+\frac12} }{(2k+1)!!}  H_{2k+1}(\zeta) \sum_{l=0}^{k+1}  \frac{(\tau/2)^l}{(2l)!!} H_{2l}  ( \eta )+\sqrt{2(1-\tau^2)}.
\end{align*}

By rearranging the terms, 
\begin{align*}
&\quad \sum_{k=0}^{N-2}  \frac{ (\tau/2)^{k+\frac12} }{(2k+1)!!}  H_{2k+1}(\zeta) \sum_{l=0}^{k+1}  \frac{(\tau/2)^l}{(2l)!!} H_{2l}  ( \eta )
\\
&= F(z,w)
-  \frac{ (\tau/2)^{N-\frac12} }{(2N-1)!!}  H_{2N-1}(\zeta) \sum_{l=0}^{N-1}  \frac{(\tau/2)^l}{(2l)!!} H_{2l}  ( \eta )
 + \sum_{k=0}^{N-2}  \frac{ (\tau/2)^{2k+\frac32} }{(2k+2)!}  H_{2k+1}(\zeta)    H_{2k+2}  ( \eta ).
\end{align*}
Therefore we obtain that 
\begin{align*}
\partial_z F(z,w)&= 2 \sqrt{\tau(1-\tau^2)} \, \zeta  F(z,w) -\tau \partial_z F(z,w)
\\
&\quad -   \sqrt{\tau(1-\tau^2)}  \frac{ (\tau/2)^{N-\frac12} }{(2N-1)!!} \Big( 2\zeta H_{2N-1}(\zeta)-H_{2N-1}'(\zeta) \Big)  \sum_{l=0}^{N-1}  \frac{(\tau/2)^l}{(2l)!!} H_{2l}  ( \eta )
\\
&\quad +   \sqrt{\tau(1-\tau^2)}  \sum_{k=0}^{N-2}  \frac{ (\tau/2)^{2k+\frac32} }{(2k+2)!}  \Big( 2\zeta H_{2k+1}(\zeta) - H'_{2k+1}(\zeta) \Big)    H_{2k+2}  ( \eta )+\sqrt{2(1-\tau^2)}.
\end{align*}
We now use the three term recurrence relation again and obtain
\begin{align*}
&\quad \partial_z F(z,w)= 2 \sqrt{\tau(1-\tau^2)} \, \zeta  F(z,w) -\tau \partial_z F(z,w)
\\
&-   \sqrt{2(1-\tau^2)} \Big[ \frac{ (\tau/2)^{N} }{(2N-1)!!} H_{2N}(\zeta)  \sum_{l=0}^{N-1}  \frac{(\tau/2)^l}{(2l)!!} H_{2l}  ( \eta )
-  \sqrt{2(1-\tau^2)}  \sum_{k=0}^{N-1}  \frac{ (\tau/2)^{2k} }{(2k)!}  H_{2k}(\zeta)    H_{2k}  ( \eta ) \Big].
\end{align*}
This leads to 
\begin{align}
\begin{split}
&\quad \partial_z F(z,w)=\Big( \sqrt{ \frac{2(1-\tau) N}{1+\tau} } p+2(1-\tau)z\Big)  F(z,w)
\\
&-  \frac{ \sqrt{2(1-\tau^2)} }{1+\tau} \Big[ \frac{ (\tau/2)^{N} }{(2N-1)!!} H_{2N}(\zeta)  \sum_{l=0}^{N-1}  \frac{(\tau/2)^l}{(2l)!!} H_{2l}  ( \eta )
- \frac{ \sqrt{2(1-\tau^2)} }{ 1+\tau }  \sum_{k=0}^{N-1}  \frac{ (\tau/2)^{2k} }{(2k)!}  H_{2k}(\zeta)    H_{2k}  ( \eta ) \Big].
\end{split}
\end{align}

Similarly, we obtain 
\begin{align*}
&\quad \partial_z \sum_{k=0}^{N-1}  \frac{ (\tau/2)^{k+\frac12} }{(2k+1)!!}   H_{2k+1} ( \eta )  
\sum_{l=0}^k  \frac{(\tau/2)^l}{(2l)!!} H_{2l}( \zeta )
\\
&= 2 \sqrt{\frac{1-\tau^2}{\tau}}  \sum_{k=0}^{N-1}  \frac{ (\tau/2)^{k+\frac12} }{(2k+1)!!}   H_{2k+1} ( \eta )  
\sum_{l=1}^k  \frac{(\tau/2)^l}{(2l-2)!!} H_{2l-1} ( \zeta )
\\
&= 2 \sqrt{\frac{1-\tau^2}{\tau}}  \sum_{k=0}^{N-1}  \frac{ (\tau/2)^{k+\frac12} }{(2k+1)!!}   H_{2k+1} ( \eta )  
\sum_{l=1}^k  \frac{(\tau/2)^l}{(2l-2)!!} \Big(  2\zeta H_{2l-2}(\zeta)-H_{2l-2}'(\zeta) \Big)
\\
&= \sqrt{(1-\tau^2) \tau}  \sum_{k=0}^{N-1}  \frac{ (\tau/2)^{k+\frac12} }{(2k+1)!!}   H_{2k+1} ( \eta )  
\sum_{l=0}^{k-1}  \frac{(\tau/2)^{l}  }{(2l)!!} \Big(  2\zeta H_{2l}(\zeta)-H_{2l}'(\zeta) \Big).
\end{align*}
Thus we have
\begin{align*}
\partial_z F(w,z) 
&=   \sqrt{(1-\tau^2)\tau}  \sum_{k=0}^{N-1}  \frac{ (\tau/2)^{k+\frac12} }{(2k+1)!!}   H_{2k+1} ( \eta )  
\sum_{l=0}^{k}  \frac{(\tau/2)^{l}  }{(2l)!!} \Big(  2\zeta H_{2l}(\zeta)-H_{2l}'(\zeta) \Big)
\\
&\quad -   \sqrt{2(1-\tau^2)}  \sum_{k=0}^{N-1}  \frac{ (\tau/2)^{2k+1} }{(2k+1)!}    H_{2k+1}(\zeta) H_{2k+1} ( \eta ) .
\end{align*}
Therefore we obtain 
\begin{align*}
\partial_z F(w,z)&= \Big( \sqrt{ 2(1-\tau^2) N } p+2(1-\tau^2)z\Big) F(w,z)
- \tau \partial_z F(w,z)
\\
&\quad -   \sqrt{2(1-\tau^2)}  \sum_{k=0}^{N-1}  \frac{ (\tau/2)^{2k+1} }{(2k+1)!}    H_{2k+1}(\zeta) H_{2k+1} ( \eta ) ,
\end{align*}
which leads to 
\begin{align}
\begin{split}
\partial_z F(w,z) &=\Big( \sqrt{ \frac{2(1-\tau) N}{1+\tau} } p+2(1-\tau)z\Big)  F(w,z) 
- \frac{ \sqrt{2(1-\tau^2)} }{ 1+\tau }    \sum_{k=0}^{N-1}  \frac{ (\tau/2)^{2k+1} }{(2k+1)!}    H_{2k+1}(\zeta) H_{2k+1} ( \eta ).
\end{split}
\end{align}

Combining all of the above, we conclude
\begin{align*}
 \partial_z \Big( F(z,w)-F(w,z) \Big) &= \Big( \sqrt{ \frac{2(1-\tau) N}{1+\tau} } p+2(1-\tau)z\Big)  \Big( F(z,w)-F(w,z) \Big) 
 \\
 &\quad +  \frac{ \sqrt{2(1-\tau^2)} }{ 1+\tau }  \sum_{k=0}^{2N-1}  \frac{ (\tau/2)^{k} }{k!}  H_{k}(\zeta)    H_{k}  ( \eta )
 \\
 &\quad -  \frac{ \sqrt{2(1-\tau^2)} }{1+\tau} \frac{ (\tau/2)^{N} }{(2N-1)!!} H_{2N}(\zeta)  \sum_{l=0}^{N-1}  \frac{(\tau/2)^l}{(2l)!!} H_{2l}  ( \eta ).
\end{align*}
Now Proposition~\ref{Prop_ODE tau} follows from the relation \eqref{kappa F}.

\end{proof}

To analyse the large-$N$ limit of the pre-kernel, let us introduce 
\begin{equation} \label{transformed kernel}
\widehat{\kappa}_N(z,w) := \omega_N(z,w) \kappa_N(z,w),
\end{equation}
where 
\begin{equation}
\omega_N(z,w) = \exp\Big[ \tau \, \Big( p \sqrt{\tfrac{N}{2(1-\tau^2)}}+z \Big)^2 +  \tau \, \Big( p \sqrt{\tfrac{N}{2(1-\tau^2)}}+w \Big)^2 - \Big( p\sqrt{\tfrac{N}{1-\tau^2}}+\sqrt{2}z \Big) \Big( p\sqrt{\tfrac{N}{1-\tau^2}}+\sqrt{2}w \Big) \Big].
\end{equation}

Let us write 
\begin{align}
\begin{split}
E^1_N(\xi, \omega) &:= 2 \sqrt{1-\tau^2} \exp\Big[  N \Big(  \tfrac{\tau}{2} (\xi^2 + \omega^2)- \xi \omega   \Big)  \Big]  \\
&\quad \times \sum_{k=0}^{2N-1} \frac{ (\tau/2)^{k} }{k!} H_{k}\Big( \sqrt{N \tfrac{1-\tau^2}{2\tau}} \xi \Big) H_{k}\Big( \sqrt{N \tfrac{1-\tau^2}{2\tau}} \omega \Big), \label{E_N^1}
\end{split}
\end{align}
and 
\begin{align}
\begin{split}
E^2_N(\xi, \omega) &:= 2 \sqrt{1-\tau^2} \exp\Big[ -N \tfrac{1-\tau}{2} (\xi^2 + \omega^2) \Big] 
\\
&\quad \times \frac{ (\tau/2)^{N} }{(2N-1)!!}  H_{2N}\Big( \sqrt{N \tfrac{1-\tau^2}{2\tau}} \xi \Big)  \sum_{l=0}^{N-1}  \frac{(\tau/2)^l}{(2l)!!} H_{2l}\Big( \sqrt{N \tfrac{1-\tau^2}{2\tau}} \omega \Big). \label{E_N^2}
\end{split}
\end{align}
Then as an immediate consequence of Proposition~\ref{Prop_ODE tau}, we have the following corollary. 

\begin{cor} \label{Cor_ODE transform}
We have 
\begin{equation} \label{ODE tau transformed with error terms}
\partial_z \widehat{\kappa}_N(z,w) = 2(z-w)\widehat{\kappa}_N(z,w) + r_N(z,w),
\end{equation}
where 
\begin{equation}
r_N(z,w) = E_N^1\Big( \tfrac{p}{\sqrt{1-\tau^2}} + \sqrt{\tfrac{2}{N}}z, \tfrac{p}{\sqrt{1-\tau^2}} + \sqrt{\tfrac{2}{N}}w \Big)
-e^{(z-w)^2} E_N^2\Big( \tfrac{p}{\sqrt{1-\tau^2}} + \sqrt{\tfrac{2}{N}}z, \tfrac{p}{\sqrt{1-\tau^2}} + \sqrt{\tfrac{2}{N}}w \Big). \label{rN} 
\end{equation}
\end{cor}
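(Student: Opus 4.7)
The proof is a direct computation: differentiate $\widehat{\kappa}_N = \omega_N \kappa_N$ by the product rule, substitute the ODE from Proposition~\ref{Prop_ODE tau} for $\partial_z \kappa_N$, and regroup. Writing
$$\partial_z \widehat{\kappa}_N(z,w) = \bigl(\partial_z \log \omega_N(z,w)\bigr) \widehat{\kappa}_N(z,w) + \omega_N(z,w)\, \partial_z \kappa_N(z,w)$$
and inserting \eqref{ODE tau} reduces the claim to two algebraic checks: (i) the resulting homogeneous coefficient equals $2(z-w)$, and (ii) multiplying the two inhomogeneous terms in \eqref{ODE tau} by $\omega_N$ produces $E_N^1$ and $-e^{(z-w)^2} E_N^2$ with the rescaled arguments indicated in \eqref{rN}.

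For (i), I would differentiate $\log \omega_N$ directly, using the factorisation $p\sqrt{N/(1-\tau^2)} + \sqrt{2}z = \sqrt{2}\bigl(p\sqrt{N/(2(1-\tau^2))} + z\bigr)$. This yields
$$\partial_z \log \omega_N(z,w) = 2\tau z - 2w + (\tau-1)\sqrt{\tfrac{2N}{1-\tau^2}}\, p = 2\tau z - 2w - \sqrt{\tfrac{2(1-\tau)N}{1+\tau}}\, p,$$
using $(1-\tau)^2/(1-\tau^2) = (1-\tau)/(1+\tau)$. Adding the prefactor $\sqrt{\tfrac{2(1-\tau)N}{1+\tau}}\, p + 2(1-\tau)z$ from \eqref{ODE tau} collapses everything to $2\tau z - 2w + 2(1-\tau)z = 2(z-w)$, as required.

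For (ii), I would introduce $\xi := p/\sqrt{1-\tau^2} + \sqrt{2/N}\, z$ and $\omega := p/\sqrt{1-\tau^2} + \sqrt{2/N}\, w$. A short calculation shows that $\sqrt{N(1-\tau^2)/(2\tau)}\, \xi$ agrees with the $\zeta$ of \eqref{zeta eta}, and that
$$\log \omega_N(z,w) = N\Bigl(\tfrac{\tau}{2}(\xi^2 + \omega^2) - \xi \omega\Bigr).$$
Thus $\omega_N$ absorbs exactly the exponential weight in \eqref{E_N^1}, so $\omega_N$ times the first inhomogeneous term of \eqref{ODE tau} equals $E_N^1(\xi, \omega)$. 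For the second, combining the weight of $\omega_N$ with the prefactor $\exp[-N\tfrac{1-\tau}{2}(\xi^2+\omega^2)]$ appearing in \eqref{E_N^2} gives
$$\exp\Bigl[N\bigl(\tfrac{\tau}{2}(\xi^2+\omega^2) - \xi \omega\bigr) + N\tfrac{1-\tau}{2}(\xi^2+\omega^2)\Bigr] = \exp\Bigl[\tfrac{N}{2}(\xi - \omega)^2\Bigr] = e^{(z-w)^2},$$
where the last equality uses $\xi - \omega = \sqrt{2/N}(z-w)$; hence the contribution becomes $-e^{(z-w)^2} E_N^2(\xi, \omega)$, matching \eqref{rN}.

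The only mild obstacle is bookkeeping of these Gaussian prefactors, in particular the cancellation $\tau/2 + (1-\tau)/2 = 1/2$ responsible for the clean $e^{(z-w)^2}$ factor in \eqref{rN} and the identity $(1-\tau)^2/(1-\tau^2) = (1-\tau)/(1+\tau)$ responsible for the clean $2(z-w)$ coefficient. Once these algebraic identities are verified the corollary is immediate.
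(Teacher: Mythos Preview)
Your proposal is correct and follows exactly the route the paper intends: in the paper the corollary is stated as ``an immediate consequence of Proposition~\ref{Prop_ODE tau}'' with no further details, and you have simply (and accurately) written out the product-rule computation and the bookkeeping of the Gaussian prefactors that make it immediate. The two algebraic checks you record---the cancellation giving $2(z-w)$ and the identity $\tfrac{N}{2}(\xi-\omega)^2=(z-w)^2$---are precisely what is implicit in the paper's one-line justification.
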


The asymptotic expansion of $r_N$ will be addressed in the next section.

\section{Asymptotic analysis for the correlation kernels} \label{sec:Strong Limit}

In this section, we analyse the large-$N$ asymptotics of the differential equation \eqref{ODE tau transformed with error terms} and prove Theorem~\ref{Thm_local ell}.

\subsection{Asymptotics of the inhomogeneous terms}

As indicated by Lemma~\ref{Lem_ODE rate} below, the key ingredient to prove Theorem~\ref{Thm_local ell} is the large-$N$ behaviour of the function $r_N$.  

\begin{prop}\label{Prop_rN}
The following asymptotics hold as $N \to \infty$.
\begin{itemize}
    \item \textbf{\textup{(The real bulk)}} For $|p|<\sqrt{2}(1+\tau)$, there exists an $\eps>0$ such that
    \begin{equation*}
    r_N(z,w) = 2 + \OO(e^{-N\eps}).
    \end{equation*}

    \item \textbf{\textup{(The real edge)}} For $p = \pm \sqrt{2}(1+\tau)$ and an arbitrary $\eps > 0$, it holds
    \begin{equation}
    r_N(z,w)=r(z,w)+\tfrac{1}{\sqrt{N}} r^{(1/2)}(z,w)+\OO(N^{-1+\eps}), 
    \end{equation}
    where 
    \begin{equation}
    r(z,w):=  \erfc(z+w) -\tfrac{1}{\sqrt{2}}e^{(z-w)^2-2z^2}\erfc(\sqrt{2}w)
    \end{equation}
    and 
    \begin{align}
    \begin{split}
    r^{(1/2)}(z,w)&:= \tfrac{1}{\sqrt{2}}(\tfrac{1+\tau}{1-\tau})^{\frac32} e^{(z-w)^2-2z^2} 
    \\
    &\quad \times \Big[\tfrac{1}{\sqrt{2\pi}} (\tfrac{4}{3}z^2-\tfrac{4}{3}zw+\tfrac{2}{3}w^2-\tfrac{\tau}{1+\tau}) e^{-2w^2} -(\tfrac{2}{3}z^3-\tfrac{\tau}{1+\tau}z)\erfc(\sqrt{2}w)\Big].
    \end{split}
    \end{align}
    
    \item \textbf{\textup{(The real axis outside the droplet)}}  For $|p|>\sqrt{2}(1+\tau)$, there exists an $\eps>0$ such that
    \begin{equation}
    r_N(z) = \OO(e^{-N\eps}).
    \end{equation}
\end{itemize}
In all three cases the error terms are uniform for $z, w$ in compact subsets of $\C$. 
\end{prop}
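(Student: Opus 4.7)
The plan is to treat the two pieces $E_N^1$ and $E_N^2$ in \eqref{rN} separately and then combine.

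For $E_N^1$, I would apply the Mehler formula
\begin{equation*}
\sum_{k=0}^\infty \tfrac{(\tau/2)^k}{k!} H_k(X) H_k(Y) = \tfrac{1}{\sqrt{1-\tau^2}}\exp\!\Big(\tfrac{2\tau XY-\tau^2(X^2+Y^2)}{1-\tau^2}\Big).
\end{equation*}
Substituting $X = \sqrt{N(1-\tau^2)/(2\tau)}\,\xi$ and $Y=\sqrt{N(1-\tau^2)/(2\tau)}\,\omega$, one checks that the prefactors in \eqref{E_N^1} and the Mehler right-hand side cancel exactly, so the \emph{infinite} Mehler sum contributes precisely $2$. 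Hence $E_N^1 = 2 - T_N^{(1)}$, where $T_N^{(1)}$ is the tail $\sum_{k \ge 2N}$. As noted in the remark following Proposition~\ref{Prop_ODE tau}, $T_N^{(1)}$ is, up to explicit normalisation, the holomorphic correlation kernel of the complex elliptic Ginibre ensemble with $2N$ particles; the precise asymptotics of $T_N^{(1)}$ in the bulk, edge and outside-droplet regimes are then supplied by the Lee--Riser analysis \cite{MR3450566}.

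For $E_N^2$, I would first derive the companion generating identity
\begin{equation*}
\sum_{l=0}^\infty \tfrac{(\tau/4)^l}{l!}\, H_{2l}(Y) = \tfrac{1}{\sqrt{1+\tau}}\, e^{\tau Y^2/(1+\tau)},
\end{equation*}
obtained from the Fourier integral representation of $H_{2l}$. With $Y = \sqrt{N(1-\tau^2)/(2\tau)}\,\omega$, the exponential $e^{\tau Y^2/(1+\tau)}$ is exactly $e^{N(1-\tau)\omega^2/2}$ and cancels the $\omega$-half of the Gaussian prefactor in \eqref{E_N^2}. Writing $\sum_{l=0}^{N-1} = \sum_{l=0}^\infty - \sum_{l\ge N}$ decomposes $E_N^2 = M_N(\xi) - R_N(\xi,\omega)$, where
\begin{equation*}
M_N(\xi) := 2\sqrt{1-\tau}\, e^{-N(1-\tau)\xi^2/2}\, \tfrac{(\tau/2)^N}{(2N-1)!!}\, H_{2N}\!\Big(\sqrt{\tfrac{N(1-\tau^2)}{2\tau}}\,\xi\Big)
\end{equation*}
depends only on $\xi$ and $R_N$ comes from the tail $\sum_{l\ge N}$. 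I would analyse $M_N$ via the contour representation $H_{2N}(x) = \tfrac{(2N)!}{2\pi i}\oint e^{2xt - t^2}\,t^{-2N-1}\,dt$: after applying Stirling to the prefactor $\tau^N N! = (\tau/2)^N (2N)!/(2N-1)!!$ and rescaling $t = \sqrt{N(1-\tau^2)/(2\tau)}\,\xi\,s$, the integrand becomes $e^{N\Phi_p(s)} g_N(s)$ for a phase $\Phi_p$ whose real saddles are distinct for $|p| > \sqrt{2}(1+\tau)$, coalesce or cross the imaginary axis at $|p| = \sqrt{2}(1+\tau)$, and are complex for $|p| < \sqrt{2}(1+\tau)$. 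A steepest-descent analysis then gives exponential smallness of $M_N$ in the bulk and outside regimes, and at the critical value $|p|=\sqrt{2}(1+\tau)$ produces a Gaussian/coalescing-saddle integral that, after Taylor-expanding $\xi = p/\sqrt{1-\tau^2} + \sqrt{2/N}\,z$, yields a complementary-error-function profile in $z$. The tail $R_N$ is treated in parallel — substitute the integral representation of $H_{2l}$, sum the resulting geometric-exponential series in $l$, and perform saddle-point analysis — producing the matching profile in $w$.

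Assembling, $r_N = 2 - e^{(z-w)^2}\bigl(M_N - R_N\bigr) + O(e^{-\eps N})$ in each regime, from which the three asserted asymptotics follow. The main obstacle will be the precise extraction of the subleading $N^{-1/2}$ correction $r^{1/2}$ at the edge. This requires carefully tracking (i) the next-to-leading Stirling corrections in $\tau^N N!$, (ii) the $O(N^{-1/2})$ shifts in the saddle location coming from the expansion $\xi = p/\sqrt{1-\tau^2}+\sqrt{2/N}\,z$, and (iii) the consistent matching of these corrections between $M_N$ and $R_N$. The non-universal prefactor $\bigl(\tfrac{1+\tau}{1-\tau}\bigr)^{3/2}$ in $r^{1/2}$ then emerges from the cubic term at the saddle combined with the $\tau$-dependent Jacobian of the rescaling, while the leading $\tau$-independent term $r(z,w)$ reflects only the second-derivative data at the saddle.
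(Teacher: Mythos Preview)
Your approach to $E_N^1$ is essentially a detour: the finite sum defining $E_N^1$ \emph{is} (up to the factor $2/N$) the complex elliptic Ginibre kernel $\calK_{2N}$, so the Lee--Riser asymptotics apply to $E_N^1$ directly, without first passing through Mehler. In particular, your tail $T_N^{(1)}=2-E_N^1$ is not ``the holomorphic correlation kernel'' but $2$ minus it; and your assembly formula $r_N=2-e^{(z-w)^2}(M_N-R_N)+O(e^{-\eps N})$ is wrong at the edge and outside the droplet, where $T_N^{(1)}$ is of order $1$ (indeed $E_N^1\to\erfc(z+w)$ at the edge and $E_N^1\to 0$ outside). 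This is where the $\erfc(z+w)$ piece of $r(z,w)$ comes from, and your sketch loses it.

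The real divergence from the paper is in your handling of $E_N^2$. The paper does \emph{not} subtract an infinite-sum tail. Instead it observes (Lemma~\ref{Lem_ EN1 EN2 deri}) that a one-line Christoffel--Darboux computation collapses $\partial_\omega E_N^2(\xi,\omega)$ to a constant times $e^{-N(1-\tau)(\xi^2+\omega^2)/2}H_{2N}(\cdot)H_{2N-1}(\cdot)$, a product of only two Hermite polynomials of fixed degree, to which the strong asymptotics (Lemma~\ref{Lem_SA Hermite}) apply immediately. One then writes $E_N^2(\xi,\omega)=E_N^2(\xi,\omega_0)+\int_{\omega_0}^{\omega}\partial_\omega E_N^2(\xi,t)\,dt$ with a convenient base point ($\omega_0=0$ in the bulk, $\omega_0=x_0$ outside, and a split contour at the edge). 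The ``initial value'' $E_N^2(\xi,0)$ is exactly your $M_N(\xi)$, since $\sum_{l\ge0}\tfrac{(\tau/4)^l}{l!}H_{2l}(0)=1/\sqrt{1+\tau}$; but the passage from $\omega_0$ to $\omega$ is done by integrating a closed-form derivative rather than by controlling your tail $R_N$. Your plan for $R_N$ --- insert an integral representation for $H_{2l}$, sum in $l$, saddle-point --- can presumably be pushed through (with the representation $H_{2l}(Y)=\tfrac{2^{2l}}{\sqrt{\pi}}\int(Y+it)^{2l}e^{-t^2}\,dt$ the $l$-sum is an exponential tail, i.e.\ an incomplete gamma function with complex argument, not a geometric series), but this is a substantially harder analysis than the paper's, and you have not indicated how to extract the $\erfc(\sqrt{2}w)$ profile or the $N^{-1/2}$ correction from it. For the outside regime the paper additionally needs a monotonicity estimate on the finite Hermite sum (Lemma~\ref{Lem_Hermite ineq}), which your outline does not address.
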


Combining Corollary~\ref{Cor_ODE transform} and Proposition~\ref{Prop_rN}, our main result Theorem~\ref{Thm_local ell} can be easily derived by solving the associated differential equations, see Subsection~\ref{Subsec_main thm proof}.
The proof of each statement in Proposition~\ref{Prop_rN} is given in the following subsections. 

\begin{rmk*}
For the special case when $p=0$, the limiting kernel was obtained in \cite[Theorem 4.1]{akemann2021skew} using an integral representation of the Hermite polynomials. 
From the viewpoint of Proposition~\ref{Prop_rN}, it also follows from the convergence $\lim_{N \to \infty} r_N=2$.
We remark that this convergence can be alternatively obtained using the classical Mehler-Hermite formula (see e.g.\ \cite[Eq.(18.18.28)]{olver2010nist}).
\end{rmk*}

For the reader's convenience, let us outline the strategy for the proof of Proposition~\ref{Prop_rN}. 
\begin{itemize}
    \item We rewrite $E_N^1, E_N^2$ as
\begin{equation} \label{eq:E_N^12 integral decomposition}
E_N^1(\xi, \omega) = E_N^1(\xi_0, \omega) + \int_{\xi_0}^{\xi} \partial_\xi E_N^1(t, \omega) \, dt , \qquad
E_N^2(\xi, \omega) = E_N^2(\xi, \omega_0) + \int_{\omega_0}^{\omega} \partial_\omega E_N^2(\xi, t) \, dt.
\end{equation}
\item Using a version of the Christoffel-Darboux identity, we express $\partial_\xi E_N^1$ and $\partial_\omega E_N^2$ only in terms of a few orthogonal polynomials (Lemma~\ref{Lem_ EN1 EN2 deri}). 
\smallskip 
\item Using the strong asymptotics of the Hermite polynomials (Lemma~\ref{Lem_SA Hermite}), we estimate each term in \eqref{eq:E_N^12 integral decomposition} for suitable choices of $\xi_0$ and $\omega_0$. 
\end{itemize}

We end this subsection by introducing the main ingredients for the proof of Proposition~\ref{Prop_rN}.

\begin{lem} \label{Lem_ EN1 EN2 deri}
For each $N$, we have
\begin{align}
\label{Derivative E_N^1}
\begin{split}
\pa_\xi E_N^1(\xi, \omega) &= \sqrt{2\tau} \Big( \frac{\tau}{2} \Big)^{2N-1} \frac{\sqrt{N}}{(2N-1)!}  
\exp\Big[ N \Big( \tfrac{\tau}{2} (\xi^2 + \omega^2) - \xi\omega \Big) \Big] 
\\
&\quad \times \Big[ \tau H_{2N}\Big( \sqrt{N\tfrac{1-\tau^2}{2\tau}} \xi \Big) H_{2N-1}\Big( \sqrt{N\tfrac{1-\tau^2}{2\tau}} \omega \Big) - H_{2N-1}\Big( \sqrt{N\tfrac{1-\tau^2}{2\tau}} \xi \Big) H_{2N}\Big( \sqrt{N\tfrac{1-\tau^2}{2\tau}} \omega \Big) \Big]
\end{split}
\end{align}
and 
\begin{align} \label{Derivative E_N^2}
\begin{split}
\pa_\omega E_N^2(\xi, \omega) &= - \sqrt{2\tau} (1-\tau) \Big( \frac{\tau}{2} \Big)^{2N-1} \frac{\sqrt{N}}{(2N-1)!} \exp\Big( -N\tfrac{1-\tau}{2} (\xi^2+\omega^2) \Big) 
\\
&\quad \times H_{2N}\Big( \sqrt{N\tfrac{1-\tau^2}{2\tau}} \xi \Big) H_{2N-1}\Big( \sqrt{N\tfrac{1-\tau^2}{2\tau}} \omega \Big).
\end{split}
\end{align}
\end{lem}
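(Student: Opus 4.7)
The strategy is identical for both identities: identify a first-order differential operator that annihilates the \emph{infinite} Mehler-type counterpart of the relevant series, then exhibit a telescoping identity reducing the operator applied to the truncated series to a single boundary term of Christoffel--Darboux shape.

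For \eqref{Derivative E_N^1}, factorize $E_N^1=2\sqrt{1-\tau^2}\,g\,f_N$ with $g:=\exp[N(\tfrac\tau2(\xi^2+\omega^2)-\xi\omega)]$ and $f_N:=\sum_{k=0}^{2N-1}\tfrac{(\tau/2)^k}{k!}H_k(X)H_k(Y)$, where $X=c\xi$, $Y=c\omega$, $c:=\sqrt{N(1-\tau^2)/(2\tau)}$. Mehler's formula gives $g\cdot f_\infty=(1-\tau^2)^{-1/2}$, so the operator $L_\xi:=\partial_\xi+N(\tau\xi-\omega)$ annihilates $f_\infty$, and $\partial_\xi E_N^1=2\sqrt{1-\tau^2}\,g\,L_\xi f_N$. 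Using only \eqref{three term} in both arguments, I would verify the telescoping identity
\begin{equation*}
L_\xi\!\left[\tfrac{(\tau/2)^k}{k!}H_k(X)H_k(Y)\right]=C_{k+1}-C_k,\qquad C_k:=\tfrac{2c(\tau/2)^k}{(1-\tau^2)(k-1)!}\bigl[\tau H_k(X)H_{k-1}(Y)-H_{k-1}(X)H_k(Y)\bigr],
\end{equation*}
with $C_0=0$ via the convention $1/(-1)!=0$. Summing over $k=0,\dots,2N-1$ yields $L_\xi f_N=C_{2N}$, and the prefactor simplification $\tfrac{4c}{\sqrt{1-\tau^2}}(\tau/2)^{2N}=\sqrt{2\tau N}\,(\tau/2)^{2N-1}$ recovers \eqref{Derivative E_N^1}.

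For \eqref{Derivative E_N^2}, set $h:=\exp[-\tfrac{N(1-\tau)}{2}(\xi^2+\omega^2)]$ and $S_N(\omega):=\sum_{l=0}^{N-1}\tfrac{(\tau/2)^l}{(2l)!!}H_{2l}(Y)$. The classical generating function $\sum_{l\ge0}\tfrac{t^l}{l!}H_{2l}(Y)=(1+4t)^{-1/2}\exp\bigl(\tfrac{4tY^2}{1+4t}\bigr)$ at $t=\tau/4$ identifies $S_\infty=(1+\tau)^{-1/2}\exp[\tfrac{N(1-\tau)\omega^2}{2}]$, so the operator $M_\omega:=\partial_\omega-N(1-\tau)\omega$ annihilates $S_\infty$, and the product rule gives $\partial_\omega(hS_N)=h\,M_\omega S_N$. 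Using $H_{2l}'=4lH_{2l-1}$ together with $2YH_{2l}=H_{2l+1}+4lH_{2l-1}$, I would verify
\begin{equation*}
M_\omega\!\left[\tfrac{(\tau/2)^l}{(2l)!!}H_{2l}(Y)\right]=A_{l+1}-A_l,\qquad A_l:=-\tfrac{2c(\tau/2)^l}{(1+\tau)(2l-2)!!}H_{2l-1}(Y),
\end{equation*}
with $A_0=0$. Hence $M_\omega S_N=A_N$, and \eqref{Derivative E_N^2} follows after substituting back and simplifying via $(2N-1)!!\,(2N-2)!!=(2N-1)!$ and $\tfrac{4c\sqrt{1-\tau^2}}{1+\tau}=2(1-\tau)\sqrt{2N/\tau}$.

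The routine part is the algebra; the crux is guessing the telescoping ansatz for $C_k,A_l$. Two hints guide it: the claimed right-hand sides of \eqref{Derivative E_N^1}--\eqref{Derivative E_N^2} involve only $H_{2N-1}$ and $H_{2N}$, forcing $C_k,A_l$ to be bilinear in $H_{k-1},H_k$ (resp.\ linear in $H_{2l-1}$) at the boundary; and the mechanism is a weighted Christoffel--Darboux identity for the Hermite weight, analogous to the one derived for the complex elliptic Ginibre kernel in \cite[Proposition 2.3]{MR3450566}. Once the ansatz is in place, both identities reduce to matching coefficients after a small number of index shifts combined with \eqref{three term}.
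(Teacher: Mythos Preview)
Your proposal is correct. For $E_N^1$ your telescoping operator $L_\xi=\partial_\xi+N(\tau\xi-\omega)$ is, after the change of variables $X=c\xi$, precisely the operator in the Christoffel--Darboux identity \eqref{Derivative S_N}; the paper simply cites that identity from \cite[Proposition~2.3]{MR3450566}, whereas you re-prove it by exhibiting the telescope $C_{k+1}-C_k$. So for the first formula the two arguments coincide in substance.

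For $E_N^2$ the routes genuinely differ. The paper observes that the even-index sum can be rewritten as the full kernel $S_{2N}$ with parameter $i\sqrt{\tau}$ and second argument $0$, via $H_{2l}(0)=(-1)^l(2l)!/l!$ and $H_{2l+1}(0)=0$; it then applies the \emph{same} Christoffel--Darboux identity \eqref{Derivative S_N} once more. Your approach instead invokes the closed-form generating function $\sum_{l\ge0}\tfrac{t^l}{l!}H_{2l}(Y)=(1+4t)^{-1/2}\exp\bigl(\tfrac{4tY^2}{1+4t}\bigr)$ to identify the annihilating operator $M_\omega=\partial_\omega-N(1-\tau)\omega$ and then telescopes directly in the index $l$. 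This is a bit more self-contained---it stays within real parameters and avoids the detour through the two-variable kernel---while the paper's trick has the aesthetic advantage of deriving both formulas from a single identity. Both arguments require only \eqref{three term} and elementary bookkeeping, and your prefactor simplifications check out.
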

\begin{proof}
This lemma immediately follows from the Christoffel-Darboux identity for the kernel of complex elliptic Ginibre ensembles found in \cite[Proposition 2.3]{MR3450566}. (See \cite[Section 3]{byun2021lemniscate} for the derivation of such identities in a more general setup.) 

To be more precise, let
\begin{equation}
S_N(\zeta, \eta) := \sum_{k=0}^{N-1} \frac{(\tau/2)^k}{k!} H_k(\zeta) H_k(\eta).
\end{equation}
Then by \eqref{E_N^1} and \eqref{E_N^2}, one can express $E_N^1$ and $E_N^2$ in terms of $S_N$. In particular, for $E_N^2$, we use the realisation 
\begin{equation*}
\sum_{l=0}^{N-1} \frac{(\tau/2)^l}{(2l)!!} H_{2l}\Big( \sqrt{N\tfrac{1-\tau^2}{2\tau}} \omega \Big)
= \sum_{k=0}^{2N-1} \frac{(i\sqrt{\tau}/2)^k}{k!} H_{k}\Big( \sqrt{N\tfrac{1-\tau^2}{2\tau}} \omega \Big) H_{k}(0),
\end{equation*}
which follows from $H_{2l}(0) = (-1)^l (2l)!/l!$ and $H_{2l+1}(0) = 0$.
Now the Christoffel-Darboux identity 
\begin{equation} \label{Derivative S_N}
\begin{split}
\pa_\zeta S_N(\zeta, \eta) &= \frac{2\tau}{1-\tau^2} (\eta-\tau\zeta) S_N(\zeta, \eta) + \frac{2}{1-\tau^2} \Big( \frac{\tau}{2} \Big)^{N} \frac{\tau H_{N}(\zeta) H_{N-1}(\eta) - H_{N-1}(\zeta) H_{N}(\eta)}{(N-1)!}
\end{split}
\end{equation}
completes the proof. 
\end{proof}

The strong asymptotics of the Hermite polynomials can be found for instance in \cite{MR3450566,MR3274226}. 
(See also \cite{MR3043718,MR1677884} for the strong asymptotics of the classical orthogonal polynomials.) 
Here we follow the conventions in \cite{MR3450566}.
To describe such asymptotic behaviours, for $T > 0$, let 
\begin{equation} \label{l LR}
l := T \Big( \log  \frac{T}{1-\tau^2}  - 1 \Big)
\end{equation}
and
\begin{equation} \label{g function LR}
g(z) := \log \Big( z+\sqrt{z^2-F_0^2} \Big) + \frac{z}{  z+\sqrt{z^2-F_0^2} } - \log 2 - \frac{1}{2}, \qquad F_0 := 2\sqrt{T\tfrac{\tau}{1-\tau^2}}.
\end{equation}
We also write 
\begin{equation} \label{psi LR}
\psi(z) := \frac{\sqrt{\tau}}{F_0} \, \Big( z+\sqrt{z^2-F_0^2} \Big).
\end{equation}

\begin{lem} \label{Lem_SA Hermite} \textup{(Strong asymptotics of Hermite polynomials)}
For $T > 0$ and $R \in \Z$, we have 
\begin{equation}
\Big| H_{TN+R}\Big( \sqrt{N\tfrac{1-\tau^2}{2\tau}} z \Big) \Big| = \Big( \frac{\tau}{2} \Big)^{-\frac{TN+R}{2}} \, \sqrt{\frac{(TN+R)!}{N}} \, \Big| e^{N (Tg(z) - l/2)}\Big| \, \OO(N^{\frac{5}{12}}), \label{Hermite strong asymptotic bound} 
\end{equation}
where the $\OO(N^{\frac{5}{12}})$-term is uniform for $z \in \mathbb{C}$, and 
\begin{align}
\begin{split}
H_{TN+R}\Big( \sqrt{N\tfrac{1-\tau^2}{2\tau}} z \Big) &= \Big( \frac{N}{2\pi(1-\tau^2)} \Big)^{\frac{1}{4}} \Big( \frac{\tau}{2} \Big)^{-\frac{TN+R}{2}} \, \sqrt{\frac{(TN+R)!}{N}}
\\
&\quad \times \psi(z)^R \sqrt{\psi^{\prime}(z)} e^{N (Tg(z) - l/2)} \big( 1 + \OO(1/N) \big), \label{Hermite strong asymptotic expansion}
\end{split}
\end{align}
where the $\OO(1/N)$-term is uniform for any compact subset of $\C \setminus [-F_0, F_0]$.
\end{lem}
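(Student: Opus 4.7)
The plan is to obtain both statements by specialising the classical Plancherel--Rotach asymptotics for Hermite polynomials to the scaling $n = TN+R$, $x = \sqrt{N(1-\tau^2)/(2\tau)}\, z$, and then carefully recording the uniform amplitude in the edge regime. Since the lemma is essentially a restatement of known results (citing \cite{MR3450566,MR3274226}), the bulk of the plan is a bookkeeping exercise in saddle-point analysis to match the conventions chosen by the paper.

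Concretely, I would begin from the contour integral representation
\begin{equation*}
H_m(u) = \frac{m!}{2\pi i} \oint e^{-s^2 + 2us}\, s^{-m-1}\,ds,
\end{equation*}
set $m = TN+R$, $u = \sqrt{N(1-\tau^2)/(2\tau)}\, z$, and rescale the contour via $s = \sqrt{N(1-\tau^2)/(2\tau)}\, \zeta$ so that the integrand becomes $\zeta^{-R-1} \exp\bigl(N \Phi(\zeta;z)\bigr)$ (times explicit constants), where $\Phi$ is a rational phase whose saddle equation is quadratic in $\zeta$. One solves to obtain the two saddles $\zeta_{\pm}(z) = (F_0/(2\sqrt{\tau}))(z \pm \sqrt{z^2 - F_0^2})$ with $F_0$ as in \eqref{g function LR}; the outer saddle is precisely $\psi(z)$ of \eqref{psi LR}. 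A direct evaluation of $\Phi$ at this saddle together with Stirling for $m!$, and some algebra matching $l$ in \eqref{l LR}, shows that the leading exponential factor is $e^{N(Tg(z) - l/2)}$. The prefactor $(N/(2\pi(1-\tau^2)))^{1/4} \sqrt{\psi'(z)}$ comes from the Gaussian integral across the saddle, using that $\Phi''(\psi(z))$ is algebraically proportional to $1/\psi'(z)$, while $\psi(z)^R$ comes from the factor $\zeta_+^{-R}$ under the integrand. The $\OO(1/N)$ remainder is the standard next-order steepest-descent correction, valid uniformly on compact subsets of $\C \setminus [-F_0,F_0]$ where the saddle is non-degenerate; this establishes \eqref{Hermite strong asymptotic expansion}.

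The uniform bound \eqref{Hermite strong asymptotic bound} across all $z \in \C$ is the more delicate half, since inside $(-F_0,F_0)$ the two saddles coalesce onto the imaginary $\zeta$-axis and produce oscillatory Plancherel--Rotach contributions, while at $z = \pm F_0$ they merge and the Hermite polynomial enters the Airy regime. In the deep bulk the two oscillatory contributions are bounded above by a constant times the exterior prefactor, with an extra $N^{-1/4}$ gain; near the edges, matching to a rescaled Airy function on an $\OO(N^{-2/3})$ neighbourhood of $\pm F_0$ produces an amplitude of order $N^{5/12}$ relative to the natural Plancherel--Rotach normalisation, and this is the worst case, dictating the exponent $5/12$ in the stated bound. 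Combining the three regimes (exterior, bulk oscillatory, edge Airy) and factoring out $\sqrt{(TN+R)!/N}\, e^{N(Tg(z)-l/2)}$ yields the claimed estimate. The main obstacle is precisely this edge/Airy amplitude bound; the cleanest route is to invoke the Riemann--Hilbert analysis of \cite{MR3450566,MR3274226} for exactly this scaling and then track the finite shift $R$ through the argument, which only modifies the prefactor by $\psi(z)^R$ and leaves all error estimates unchanged.
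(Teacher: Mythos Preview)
The paper does not actually prove this lemma: it is stated as a quotation of known results, with the sentence ``The strong asymptotics of the Hermite polynomials can be found for instance in \cite{MR3450566,MR3274226}'' appearing immediately before the statement, and no proof is given afterwards. Your proposal therefore goes well beyond what the paper does; you sketch a genuine derivation via the contour-integral representation and steepest descent, while the paper simply imports the result from Lee--Riser and Mo in the stated normalisation.

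Your outline is essentially correct as a route to these asymptotics: the saddle-point reduction of the integral representation does produce the exterior expansion \eqref{Hermite strong asymptotic expansion} with the claimed prefactors, and the uniform $\OO(N^{5/12})$ bound is indeed governed by the Airy transition at $\pm F_0$. Two small points to watch if you flesh it out. First, your saddle $\zeta_+(z)$ and the paper's $\psi(z)$ differ by a normalising constant, so the bookkeeping that turns $\zeta_+^{-R-1}$ into $\psi(z)^R$ needs an explicit check against the definitions \eqref{g function LR}--\eqref{psi LR}; the extra constants get absorbed into $(\tau/2)^{-(TN+R)/2}$ and the factorial. Second, for the global bound \eqref{Hermite strong asymptotic bound} one must verify that the exponential factor $|e^{N(Tg(z)-l/2)}|$ really dominates the oscillatory/Airy contributions uniformly in $z$, including across the cut $[-F_0,F_0]$ where $g$ has to be interpreted via its boundary values; this is where citing the Riemann--Hilbert analyses is the path of least resistance, exactly as you suggest at the end.
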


In the sequel, it is convenient to write 
\begin{equation}
K_T := \{ z\in\C \, | \, \tfrac{1-\tau}{1+\tau} (\re z)^2 + \tfrac{1+\tau}{1-\tau} (\im z)^2 \leq T  \}, \qquad
\curvature_T := \tfrac{1}{\sqrt{T}} ( \tfrac{1+\tau}{1-\tau} )^{\frac32}, \label{KT LR}
\end{equation}
and
\begin{equation}
\Omega(z) := \abs{z}^2 - \tau\re z^2 - 2T\re g(z) + l. \label{Omega LR}
\end{equation}
In the rest of this section, we will mainly consider the case $T = 2$.

\subsection{Inside the bulk}

In this section we prove the first part of Proposition~\ref{Prop_rN}. By \eqref{rN}, it immediately follows from the next lemma.

\begin{lem} \label{Error terms in real bulk}
Let $x_0 \in \R$ with $\abs{x_0} < \sqrt{2 \frac{1+\tau}{1-\tau}}$. Then there exist a neighbourhood $U \subset \C$ of $x_0$ and an $\eps > 0$ such that the following estimates hold uniformly for $\xi, \omega \in U$:
\begin{equation*}
E_N^1(\xi, \omega) = 2 + \OO(e^{-N\eps}), \qquad
E_N^2(\xi, \omega) = \OO(e^{-N\eps}).
\end{equation*}
\end{lem}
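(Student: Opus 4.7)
The plan is to follow the three-step scheme outlined just before the lemma statement. Since only upper bounds are required here, the universal estimate \eqref{Hermite strong asymptotic bound} will suffice throughout. For $E_N^1$, I choose the boundary point $\xi_0=\omega$ and write
\begin{equation*}
E_N^1(\xi,\omega) = E_N^1(\omega,\omega) + \int_\omega^\xi \partial_\xi E_N^1(t,\omega)\,dt.
\end{equation*}
The value $E_N^1(\omega,\omega) = 2 + \OO(e^{-N\eps})$ is pinned by the classical Mehler--Hermite identity
\begin{equation*}
\sum_{k=0}^\infty \frac{(\tau/2)^k}{k!}\,H_k(y)^2 = \frac{1}{\sqrt{1-\tau^2}}\,\exp\Big( \tfrac{2\tau y^2}{1+\tau} \Big)
\end{equation*}
applied at $y=\sqrt{N(1-\tau^2)/(2\tau)}\,\omega$: the corresponding full-sum version of \eqref{E_N^1} at $\xi=\omega$ equals exactly $2$. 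The truncated tail $\sum_{k\ge 2N}$, multiplied by the exponential prefactor, is estimated by combining \eqref{Hermite strong asymptotic bound} with Stirling's formula, yielding a per-term bound $\OO(N^{-1/6})\,e^{-N\,\Omega(\omega)}$ where $\Omega$ is computed with parameter $T=k/N$ in \eqref{Omega LR}. Since $x_0 \in \Int K_2 \subset \Int K_T$ for every $T\ge 2$, we have $\Omega(\omega)\ge c>0$ uniformly for $\omega$ in a small complex neighbourhood $U$ of $x_0$ and for $T\ge 2$; in fact $\Omega \sim T\log(1/\tau)$ at large $T$, so summation over $k$ gives $\OO(e^{-N\eps})$.

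For the integrand, Lemma~\ref{Lem_ EN1 EN2 deri} expresses $\partial_\xi E_N^1(t,\omega)$ as an explicit combination of products $H_{2N}\cdot H_{2N-1}$ times the gamma-factorial prefactor $(\tau/2)^{2N-1}\sqrt{N}/(2N-1)!$. Applying \eqref{Hermite strong asymptotic bound} with $T=2$ to both Hermite factors and simplifying via Stirling, the various prefactors cancel up to a constant, leaving an exponent that after a short computation equals $\tfrac{1}{2}|t-\bar\omega|^2 - \tfrac{1}{2}(\Omega(t)+\Omega(\omega))$. For $t,\omega\in U$ with $U$ small, the diameter term is dominated by $(\Omega(t)+\Omega(\omega))/2 \ge c>0$, so $|\partial_\xi E_N^1(t,\omega)| \le \OO(N^{5/6})\,e^{-Nc}$, and integrating over the short segment from $\omega$ to $\xi$ produces $\OO(e^{-N\eps})$. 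This completes the first estimate.

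For $E_N^2$, the expression \eqref{E_N^2} is already an explicit single product. I apply \eqref{Hermite strong asymptotic bound} directly to $H_{2N}(\sqrt{N(1-\tau^2)/(2\tau)}\,\xi)$, and recognise the finite sum $\sum_{l=0}^{N-1}\frac{(\tau/2)^l}{(2l)!!}H_{2l}(\sqrt{N(1-\tau^2)/(2\tau)}\,\omega)$ as a restricted Mehler expansion via the identities $H_{2l}(0)=(-1)^l(2l)!/l!$ and $H_{2l+1}(0)=0$ (cf.\ the proof of Lemma~\ref{Lem_ EN1 EN2 deri}); the same tail analysis as above bounds it by $\OO(N^{-1/4})\,e^{-N\,\Omega(\omega)/2}$. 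Cancelling the remaining gamma-factorial prefactor in \eqref{E_N^2} against the exponential, the product is bounded by $\OO(N^{\alpha})\,e^{-N(\Omega(\xi)+\Omega(\omega))/2}$ for some fixed $\alpha$, hence $\OO(e^{-N\eps})$ on $U$.

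The main obstacle is the uniformity of the $E_N^1$-tail bound over all $k\ge 2N$: since the refined expansion \eqref{Hermite strong asymptotic expansion} fails when $k/N\to\infty$, one must use only \eqref{Hermite strong asymptotic bound} and verify the asymptotic $\Omega \sim T\log(1/\tau)$ for large $T$, which guarantees that the geometric-type factor dominates the polynomial $\OO(N^{5/12})$ and produces an absolutely convergent series. The endpoint $\tau=0$ is handled separately via the limiting behaviour of the Hermite polynomials noted in the remark after Proposition~\ref{Prop_ODE tau}.
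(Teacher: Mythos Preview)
Your overall strategy---the integral decomposition of \eqref{eq:E_N^12 integral decomposition}, use of Lemma~\ref{Lem_ EN1 EN2 deri} for the derivatives, and the bound \eqref{Hermite strong asymptotic bound}---matches the paper's. But two of your base-point choices create problems that the paper's choices sidestep.

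\textbf{On $E_N^1$.} You take $\xi_0=\omega$ and invoke the Mehler identity, so you must bound the tail $\sum_{k\ge 2N}(\tau/2)^k H_k(y)^2/k!$ with $|y|\sim\sqrt{N}$. You try to do this by applying \eqref{Hermite strong asymptotic bound} at $T=k/N$ for every $k\ge 2N$, but that lemma is stated for \emph{fixed} $T$; the implied constant in the $\OO(N^{5/12})$ is not asserted to be uniform as $T\to\infty$. Your final paragraph flags this but does not close the gap. The paper instead takes $\xi_0=\bar\omega$, so that $E_N^1(\bar\omega,\omega)=\tfrac{2}{N}\calK_{2N}(\omega,\bar\omega)$ is precisely (twice) the one-point density of the complex elliptic Ginibre ensemble, for which the bulk estimate $2+\OO(e^{-N\eps})$ is available directly from Lee--Riser \cite[Lemma D.4]{MR3450566}. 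No tail sum is needed.

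\textbf{On $E_N^2$.} Your direct bound contains an error. The finite sum $\sum_{l=0}^{N-1}\frac{(\tau/2)^l}{(2l)!!}H_{2l}\bigl(\sqrt{N(1-\tau^2)/(2\tau)}\,\omega\bigr)$, multiplied by $e^{-N(1-\tau)\omega^2/2}$, is \emph{not} $\OO(N^{-1/4})e^{-N\Omega(\omega)/2}$: the full Mehler series (with parameter $i\sqrt{\tau}$ and second argument $0$) equals $\frac{1}{\sqrt{1+\tau}}e^{N(1-\tau)\omega^2/2}$, so after the prefactor the quantity is $\frac{1}{\sqrt{1+\tau}}+(\text{tail})$, i.e.\ of order $1$, not exponentially small. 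Consequently your claimed product bound $\OO(N^\alpha)e^{-N(\Omega(\xi)+\Omega(\omega))/2}$ is wrong---there is no $\omega$-decay. The conclusion survives only because the $\xi$-factor $\frac{(\tau/2)^N}{(2N-1)!!}H_{2N}(\cdots\xi)\,e^{-N(1-\tau)\xi^2/2}$ already carries $e^{-N\Omega(\xi)/2+N(\Im\xi)^2}$ on its own, which suffices for small $\Im\xi$. But to make even the $O(1)$ bound on the $\omega$-sum rigorous for complex $\omega$, you again need a tail estimate with the same uniformity-in-$T$ issue as above. The paper avoids all of this by taking $\omega_0=0$: then the sum becomes $\sum_{l=0}^{N-1}\frac{(\tau/2)^l}{(2l)!!}H_{2l}(0)$, a purely numerical series with no large argument, which equals $\frac{1}{\sqrt{1+\tau}}+\OO(\tau^N/\sqrt{N})$ by elementary means, and the passage from $\omega_0=0$ to $\omega$ goes through the derivative estimate \eqref{Derivative E_N^2} exactly as for $E_N^1$.
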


\begin{proof}

By Lemma~\ref{Lem_ EN1 EN2 deri} and the asymptotic \eqref{Hermite strong asymptotic bound}, there exists a constant $C > 0$ such that
\begin{align*}
\abs{\pa_\xi E_N^1(\xi, \omega)} &\leq C N^{5/6} \exp\Big( -N\re \Big[ \xi\omega - \tfrac{\tau}{2}(\xi^2+\omega^2) - 2g(\xi) - 2g(\omega) + l \Big] \Big), \\
\abs{\pa_\omega E_N^2(\xi, \omega)} &\leq C N^{5/6} \exp\Big( -N\re \Big[ \xi\omega - \tfrac{\tau}{2}(\xi^2+\omega^2) + \tfrac{(\xi-\omega)^2}{2}- 2g(\xi) - 2g(\omega) + l \Big] \Big) 
\end{align*}
for a sufficiently large $N$ and $\xi, \omega \in \C$.

By \cite[Lemma D.1]{MR3450566}, we have that $\Omega(z) \geq 0$ for all $z \in \C$ and that $\Omega(z) = 0$ only for $z \in \partial K_T$. 
Here, $\Omega$ is given by \eqref{Omega LR}.
Notice that $x_0 \in \mathring{K}_2$ (i.e.\ \eqref{KT LR} with $T=2$).
Hence there exists a compact set $V \subset \mathring{K}_2$ that contains an open neighbourhood of $x_0$ and $\Omega(z) > 2\eps$ for all $z \in V$ with some $\eps>0$.
Consequently we obtain that for $\xi, \omega \in V$ with $\abs{\xi-\bar{\omega}} < \sqrt{2\eps}$,
\begin{align*}
\re \Big[ \xi\omega - \tfrac{\tau}{2}(\xi^2+\omega^2) - 2g(\xi) - 2g(\omega) + l \Big]
= -\frac{1}{2}\abs{\xi-\bar{\omega}}^2 + \frac{1}{2}\Omega(\xi) + \frac{1}{2}\Omega(\omega)
> \eps.
\end{align*}
Let us take 
$U^1 \subset V \cap \{ z \mid \abs{z-x_0} < \sqrt{\eps/2} \}.$ 
Then we have the uniform estimate
\begin{equation} \label{Derivative E_N^1 Estimate}
\partial_\xi E_N^1(t, \omega) = \OO(N^{5/6} e^{-N\eps}), \qquad (t, \omega \in U^1).
\end{equation}

To analyse $E_N^2$, notice that for $\xi, \omega \in V$ with $(\im \xi)^2, (\im \omega)^2 < \eps/2$,
\begin{align*}
\re \Big[ \xi\omega - \tfrac{\tau}{2}(\xi^2+\omega^2) + \tfrac{(\xi-\omega)^2}{2}- 2g(\xi) - 2g(\omega) + l \Big]
= - (\im \xi)^2 - (\im \omega)^2  + \frac{1}{2}\Omega(\xi) + \frac{1}{2}\Omega(\omega)
> \eps .
\end{align*}
Let $U^2 \subset V \cap \{ z \mid \abs{\im z} < \sqrt{\eps/2} \}$. 
Then 
\begin{equation} \label{Derivative E_N^2 Estimate}
\partial_\omega E_N^2(\xi, t) = \OO(N^{5/6} e^{-N\eps}), \qquad (\xi, t \in U^2) 
\end{equation}
holds uniformly. 

Next, we analyse $E_N^1(\xi_0, \omega)$ and $E_N^2(\xi, \omega_0)$ for suitable choices of $\xi_0$ and $\omega_0$. 
For $E_N^1$, let us choose $\xi_0 = \bar{\omega}$. 
Then $E_N^1(\bar{\omega}, \omega)$ corresponds to the density of the complex elliptic Ginibre ensemble with $2N$ eigenvalues in the bulk. 
(See the next subsection for further details.)
In particular, by \cite[Lemma D.4]{MR3450566}, we have the uniform estimate
\begin{equation} \label{E_N^1 Initial value}
E_N^1(\bar{\omega}, \omega)
= 2 + \OO(e^{-N\eps}).
\end{equation}

We now claim that
\begin{equation} \label{E_N^2 Initial value}
E_N^2(\xi, 0)
= \OO(e^{-N\eps/2} N^{ \frac{1}{6} }), \qquad (\xi \in U^2)
\end{equation}
holds uniformly. 
By \eqref{Hermite strong asymptotic bound}, we have
\begin{align*}
&\quad \Big| \exp\Big( -N\frac{1-\tau}{2} \xi^2 \Big) \frac{(\tau/2)^N}{(2N-1)!!} H_{2N}\Big( \sqrt{N\tfrac{1-\tau^2}{2\tau}} \xi \Big) \Big|
\\
&= \Big( \frac{(2N)!!}{(2N-1)!! \, N} \Big)^{\frac12} \exp\Big( -N\re\Big[ \tfrac{1-\tau}{2} \xi^2 -2g(\xi) + \tfrac{l}{2} \Big]\Big) \OO(N^{ \frac{5}{12} }).
\end{align*}
We also have that for $\xi \in V$ with $(\im \xi)^2 < \eps$,
\begin{equation*}
\re\Big[ \tfrac{1-\tau}{2} \xi^2 -2g(\xi) + \tfrac{l}{2} \Big]
= \frac{1}{2}\Omega(\xi) - \frac{1}{2}(\im \xi)^2
> \frac{\eps}{2}.
\end{equation*}
Notice also that Stirling's formula yields
\begin{equation*}
\Big( \frac{(2 N)!!}{(2 N - 1)!! \, N} \Big)^{\frac12} = \OO(N^{-\frac14}).
\end{equation*}
Finally, due to a well-known Taylor series, we have
\begin{align} \label{E_N^2 Taylor sum}
\sum_{l=0}^{N-1} \frac{(\tau/2)^l}{(2l)!!} H_{2l}(0)
= \sum_{l=0}^{N-1} \Big( -\frac{\tau}{4} \Big)^l \, \frac{(2l)!}{(l!)^2}
= \frac{1}{\sqrt{1+\tau}} + \OO\Big( \tfrac{\tau^N}{\sqrt{N}} \Big)
\end{align}
Combining all of the above with \eqref{E_N^2}, we obtain \eqref{E_N^2 Initial value}. 

We now choose $U \subset U^1 \cap U^2 = V \cap \{z \mid \abs{z-x_0} < \sqrt{\eps/2}\}$.
Then using \eqref{Derivative E_N^1 Estimate}, \eqref{E_N^1 Initial value} for $E_N^1$ and \eqref{Derivative E_N^2 Estimate}, \eqref{E_N^2 Initial value} for $E_N^2$, we obtain
\begin{align*}
E_N^1(\xi, \omega) &= E_N^1(\bar{\omega}, \omega) + \int_{\bar{\omega}}^{\xi} \partial_\xi E_N^1(t, \omega) \, dt = 2 + 
\OO(N^{ \frac{5}{6} } e^{-N\eps}), \\
E_N^2(\xi, \omega) &= E_N^2(\xi, 0) + \int_{0}^{\omega} \partial_\omega E_N^2(\xi, t) \, dt = \OO(N^{ \frac16 } e^{-\frac{N\eps}{2}}) 
,
\end{align*}
where the error terms are uniform for $\xi, \omega \in U$. This completes the proof. 
\end{proof}

\subsection{At the edge}

In this subsection, we prove the second assertion of Proposition~\ref{Prop_rN}. 

Let us define 
\begin{equation} \label{calK}
\calK_n(z, \overline{w}) := N \sqrt{1-\tau^2} e^{ -N ( z \overline{w} - \tfrac{\tau}{2} (z^2 + \overline{w}^2) ) } \sum_{j=0}^{n-1} \frac{(\tau/2)^j}{j!} H_j\Big( \sqrt{N\tfrac{1-\tau^2}{2\tau}} z \Big) \overline{ H_j\Big( \sqrt{N\tfrac{1-\tau^2}{2\tau}} w \Big) }.
\end{equation}
Notice that the function $\calK_{2N}$ is same as $E_N^1$ in \eqref{E_N^1} up to a prefactor. 

It is well known that $\calK_n$ corresponds to the pre-kernel of the complex elliptic Ginibre ensemble with $n$ points, which is known to form a determinantal point process, see e.g.\ \cite{fyodorov1997almost}.  
As $n, N \to \infty$ while keeping $n/N=T$ fixed, the elliptic Ginibre ensemble condensates to the droplet $K_T$ in \eqref{KT LR}. 
Moreover, the asymptotic behaviour of \eqref{calK} on the diagonal was extensively studied in \cite{MR3450566}. Along the same lines as the proof of \cite[Theorem 1.1]{MR3450566}, one can obtain the following proposition. 
(See also \cite[Corollary 8.12]{riser2013universality} for the leading term of the proposition.) 

\begin{prop} \textup{(Cf. \cite[Theorem 3.10]{MR3450566})} \label{LeeRiser edge kernel}
Given $T>0$ and $z_0 \in \partial K_T$, let $\mathbf{n}=\mathbf{n}(z_0)\in\C$ be the outer unit normal vector of $K_T$ at $z_0$.
Then as $n, N \to \infty$ while keeping $n/N=T$ fixed and for an arbitrary $\eps > 0$, we have
\begin{equation} \label{calK expansion}
\frac{1}{N}\calK_n\Big( z_0 + \tfrac{\xi\mathbf{n}}{\sqrt{N}}, \overline{z_0 + \tfrac{\omega\mathbf{n}}{\sqrt{N}}} \Big)
=\frac{1}{2} \erfc\Big(\frac{\xi+\overline{\omega}}{\sqrt{2}}\Big)  + \frac{1}{\sqrt{N}} \frac{\curvature(z_0)}{\sqrt{2\pi}} \frac{\xi^2-\xi\overline{\omega}+\overline{\omega}^2-1}{3} e^{-\frac12 (\xi+\overline{\omega})^2}  + \OO(N^{-1+\eps}),
\end{equation}
where $\curvature(z_0)$ is the curvature of $\partial K_T$ at $z_0$.
Here the error bound is uniform for bounded $\xi, \omega$ and over $z_0 \in \partial K_T$.
\end{prop}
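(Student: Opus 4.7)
The statement concerns the boundary asymptotic expansion of the reproducing kernel of the complex elliptic Ginibre ensemble, i.e.\ precisely the setting of \cite[Theorem~1.1]{MR3450566}; Lee--Riser established the expansion on the diagonal $\xi = \bar\omega$ (corresponding to the one-point density), and the present off-diagonal version should be provable along the same lines, with additional bookkeeping to keep track of the extra variable.

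\textbf{Step 1 (Reduction via a Christoffel--Darboux identity).} Specialise \eqref{Derivative S_N} to $n = TN$; this turns it into a first-order linear ODE in $\zeta$ for $S_n(\zeta,\bar\eta)$ whose inhomogeneous term involves only the two boundary Hermite polynomials $H_n,\,H_{n-1}$ evaluated at scaled arguments, multiplied by the Gaussian weight $e^{-N(z\bar w - \frac{\tau}{2}(z^2+\bar w^2))}$ of $\calK_n$. After rescaling $z = z_0 + \xi\mathbf{n}/\sqrt{N}$, $w = z_0 + \omega\mathbf{n}/\sqrt{N}$, this becomes an ODE in $\xi$ for the function $\tfrac{1}{N}\calK_n(z,\bar w)$, reducing the asymptotic problem to the asymptotics of $H_n$ and $H_{n-1}$ at the scaled edge points.

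\textbf{Step 2 (Strong asymptotics, Taylor expansion, and integration).} Because $z_0 \in \partial K_T$ lies strictly outside the focal segment $[-F_0,F_0]$, the expansion \eqref{Hermite strong asymptotic expansion} applies uniformly in a complex neighbourhood of $z_0$. Plug it into the inhomogeneous term and Taylor-expand both the large phase $e^{N(Tg(z) - l/2)}$ and the Gaussian weight around $z_0$ to third order in the displacement $\xi\mathbf{n}/\sqrt{N}$. The real part of the combined exponent vanishes at $z_0$ (since $\Omega(z_0)=0$), reduces to $-\tfrac{1}{2}(\xi+\bar\omega)^2$ at the quadratic level (by the tangent--normal decomposition of $\nabla\Omega$ at $z_0$), and produces a cubic remainder of order $1/\sqrt{N}$. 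The slowly varying factors $\psi(\cdot)^{R}\sqrt{\psi'(\cdot)}$ in \eqref{Hermite strong asymptotic expansion} can be frozen at $z_0$ to leading order. Integrating the resulting ODE in $\xi$ from $\xi = +\infty$, using that $\calK_n$ decays super-exponentially on the outer side of the droplet (by estimates of the type used in Lemma~\ref{Error terms in real bulk}), yields the claimed expansion: the leading term integrates to $\tfrac{1}{2}\erfc\bigl(\tfrac{\xi+\bar\omega}{\sqrt{2}}\bigr)$, and the cubic correction integrates to a term of the form $\tfrac{1}{\sqrt{N}}\,C(z_0)\,\tfrac{\xi^2-\xi\bar\omega+\bar\omega^2-1}{3}\,e^{-(\xi+\bar\omega)^2/2}$.

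\textbf{Main obstacle.} Since the strong-asymptotic formula \eqref{Hermite strong asymptotic expansion} only carries a $(1+\OO(1/N))$ multiplicative error, the entire $1/\sqrt{N}$ correction must be extracted from the third-order Taylor expansion of the $N$-dependent phase $e^{N T g(\cdot)}$ and the Gaussian weight around $z_0$. The delicate part is the algebraic simplification that converts these cubic derivatives (involving $g'''(z_0)$, $\psi'(z_0)$, and $\mathbf{n}'(z_0)$) into the clean curvature-dependent coefficient $\tfrac{\curvature(z_0)}{\sqrt{2\pi}}$, which requires exploiting the ellipse parametrisation of $\partial K_T$ under which the outward normal $\mathbf{n}(z_0)$ and the curvature $\curvature(z_0)$ admit explicit expressions, and verifying that all error terms are uniform in $z_0 \in \partial K_T$ and in $(\xi,\omega)$ over compact sets.
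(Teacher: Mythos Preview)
The paper does not give a detailed proof of this proposition; it simply states that ``along the same lines as the proof of \cite[Theorem 1.1]{MR3450566}, one can obtain the following proposition,'' and refers to \cite[Corollary 8.12]{riser2013universality} for the leading term. Your proposal is precisely an elaboration of that Lee--Riser route (Christoffel--Darboux identity \eqref{Derivative S_N}, strong asymptotics \eqref{Hermite strong asymptotic expansion}, Taylor expansion of the phase to third order, integration of the resulting ODE), so it is in full agreement with what the paper indicates.

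One small remark on execution: integrating the ODE from $\xi=+\infty$ is fine conceptually, but the Taylor expansion of $g$ and $\psi$ about $z_0$ is only valid on a mesoscopic window $|\xi|\le N^\alpha$ for some $\alpha<1/6$; outside that window you must instead invoke the global bound \eqref{Hermite strong asymptotic bound} and the positivity of $\Omega$ away from $\partial K_T$ to show the inhomogeneous term is $\OO(e^{-N^{2\alpha}})$. This is exactly the splitting the paper uses for $E_N^2$ in \eqref{EN2 edge decom}, and you already flag it in your ``main obstacle'' paragraph, so there is no gap.
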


We shall need the following elementary estimates. We provide the proof in Appendix~\ref{Appendix 1}.
Recall that $l, g, \psi$ and $\curvature_T$ are given by \eqref{l LR}, \eqref{g function LR}, \eqref{psi LR} and \eqref{KT LR}.

\begin{lem} \label{Estimates for E_N^2 at edge}
Let $T>0$ and $x_0=\sqrt{T\frac{1+\tau}{1-\tau}}$.
As $N \to \infty$, we have the following:
\begin{itemize}
    \item for $0\leq\alpha<1/2$, we have
    \begin{equation} \label{Lemma A1 1}
    \re\Big[ \tfrac{1-\tau}{2}s^2-Tg(s)+\tfrac{l}{2} \Big] \geq N^{-1+2\alpha}, \qquad
    s \in [0,x_0-\sqrt{\tfrac{2}{N}}N^\alpha];
    \end{equation}
    
    \item for $0\leq\nu<1/6$, we have
    \begin{equation} \label{Lemma A1 2}
    \exp\Big( -N\Big[ \tfrac{1-\tau}{2} \Big( x_0+\sqrt{\tfrac{2}{N}}z \Big)^2 - Tg\Big( x_0+\sqrt{\tfrac{2}{N}}z \Big) +\tfrac{l}{2} \Big] \Big) = e^{-2z^2} \Big( 1 + \frac{2\sqrt{2}}{3}\curvature_T\frac{z^3}{\sqrt{N}} + \OO(N^{-1+6\nu}) \Big)
    \end{equation}
    uniformly for $z=\OO(N^\nu)$;
    
    \item for $0 \leq \nu < 1/6$, we have
    \begin{align} 
    \begin{split} \label{Lemma A1 3}
    \sqrt{\psi'\Big(x_0+\sqrt{\tfrac{2}{N}}z\Big)} &= \Big(\frac{1+\tau}{T(1-\tau)}\Big)^{\frac14} \Big( 1-\frac{\sqrt{2}\tau}{1+\tau}\curvature_T\frac{z}{\sqrt{N}} + \OO(N^{-1+2\nu}) \Big), 
    \\
    \sqrt{\psi'\Big(x_0+\sqrt{\tfrac{2}{N}}z\Big)} \frac{\sqrt{\psi'\Big(x_0+\sqrt{\tfrac{2}{N}}s\Big)}}{\psi\Big(x_0+\sqrt{\tfrac{2}{N}}s\Big)} &= \Big(\frac{1+\tau}{T(1-\tau)}\Big)^{\frac12} \Big( 1-\frac{\sqrt{2}}{1+\tau}\curvature_T\frac{s+\tau z}{\sqrt{N}}+\OO(N^{-1+2\nu})\Big)
     \end{split}
    \end{align}
    uniformly for $z, s=\OO(N^\nu)$.
\end{itemize}
\end{lem}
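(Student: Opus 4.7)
The plan is to reduce all three items to Taylor expansions of $g$ and $\psi$ at $z = x_0$, combined with the positivity properties of $\Omega$ already recalled in the excerpt. With the shorthand $u(z) := z+\sqrt{z^2 - F_0^2}$ and $v(z) := \sqrt{z^2 - F_0^2}$, differentiating \eqref{g function LR} and \eqref{psi LR} gives the compact formulas
\begin{equation*}
g'(z) = \tfrac{2}{u(z)}, \quad g''(z) = -\tfrac{2}{u(z)v(z)}, \quad g'''(z) = \tfrac{2}{v(z)^3}, \quad \psi'(z) = \tfrac{\sqrt{\tau}\,u(z)}{F_0\,v(z)}, \quad \psi''(z) = -\tfrac{\sqrt{\tau}\,F_0}{v(z)^3}.
\end{equation*}
Using $F_0^2 = 4T\tau/(1-\tau^2)$ and $x_0^2 = T(1+\tau)/(1-\tau)$ one finds $u(x_0) = 2\sqrt{T}/\sqrt{1-\tau^2}$ and $v(x_0) = \sqrt{T(1-\tau)/(1+\tau)}$, after which the above collapse at $z = x_0$ to $\psi(x_0) = 1$, $\psi'(x_0) = \sqrt{(1+\tau)/(T(1-\tau))}$, $\psi''(x_0)/\psi'(x_0) = -2\tau\curvature_T/(1+\tau)$ and $\psi'(x_0)/\psi(x_0) = (1-\tau)\curvature_T/(1+\tau)$. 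In particular, setting $h(z) := \tfrac{1-\tau}{2}z^2 - Tg(z) + l/2$ (whose real part equals $\Omega(z)/2$ on the real axis), the same calculation yields $h(x_0) = h'(x_0) = 0$ (reflecting the quadratic vanishing of $\Omega$ on $\partial K_T$), $h''(x_0) = 2$ and $h'''(x_0) = -2\curvature_T$.

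Granted these identifications, I would prove part (1) by splitting the interval $[0, x_0 - \sqrt{2/N}\,N^\alpha]$ at a fixed small $\delta > 0$. On $[x_0 - \delta, x_0]$ the Taylor expansion of $\Omega$ around $x_0$ gives $\Omega(s) \geq 2(1-o(1))(x_0-s)^2$, while on $[0, x_0 - \delta]$ the function $\Omega$ is bounded below by a positive constant by continuity together with \cite[Lemma~D.1]{MR3450566}. These two bounds combine to yield \eqref{Lemma A1 1}, since $\sqrt{2/N}\,N^\alpha \to 0$ as $\alpha < 1/2$.

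For part (2), Taylor expansion to third order gives
\begin{equation*}
Nh\bigl(x_0+\sqrt{2/N}\,z\bigr) = 2z^2 - \tfrac{2\sqrt{2}}{3}\curvature_T\tfrac{z^3}{\sqrt{N}} + \OO(N^{-1+4\nu})
\end{equation*}
uniformly for $z = \OO(N^\nu)$. Since $\nu < 1/6$, the cubic correction is $o(1)$, so expanding the exponential in this correction to first order and absorbing its square into the remainder produces \eqref{Lemma A1 2}. For part (3), the linearisation of $\log\psi'$ at $x_0$ followed by substitution of $\psi''(x_0)/\psi'(x_0)$ gives the first formula in \eqref{Lemma A1 3}. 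For the second formula, I would add the expansions $\tfrac{1}{2}\log\psi'(x_0+\sqrt{2/N}z) + \tfrac{1}{2}\log\psi'(x_0+\sqrt{2/N}s) - \log\psi(x_0+\sqrt{2/N}s)$; substituting the values $\psi''(x_0)/\psi'(x_0) = -2\tau\curvature_T/(1+\tau)$ and $\psi'(x_0)/\psi(x_0) = (1-\tau)\curvature_T/(1+\tau)$ and collecting, the coefficient of $\sqrt{2/N}$ simplifies to $-\curvature_T(s + \tau z)/(1+\tau)$, which is exactly the claimed correction.

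The main obstacle is not conceptual but computational: one must carefully verify that the three derivative combinations $h'''(x_0)$, $\psi''(x_0)/\psi'(x_0)$ and $\psi'(x_0)/\psi(x_0)$ really reduce to clean multiples of the curvature $\curvature_T$ after all the factors of $T$, $\tau$ and $F_0$ are substituted and simplified, and that the algebraic cancellation in the second part of \eqref{Lemma A1 3} produces $s + \tau z$ rather than some other linear combination. Once these identifications are in place, every other step is a routine Taylor expansion with uniform remainder in the indicated range $z, s = \OO(N^\nu)$.
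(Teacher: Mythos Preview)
Your proposal is correct and follows essentially the same approach as the paper: both proofs reduce all three items to Taylor expansions of $h$ and $\psi$ at $x_0$, split the interval in part (1) at a fixed $\delta>0$ using the positivity of $\Omega$ away from $\partial K_T$ and its quadratic vanishing near $x_0$, and compute the same derivative values $\psi(x_0)=1$, $\psi'(x_0)=\sqrt{(1+\tau)/(T(1-\tau))}$, $\psi''(x_0)=-2\tau(1+\tau)/(T(1-\tau)^2)$. The only difference is presentational: the paper cites \cite[Lemmas~3.2, 3.3, 3.5]{MR3450566} for the expansions of $g$ and $\psi$, whereas you compute the derivatives directly via the compact formulas for $g', g'', g''', \psi', \psi''$ in terms of $u$ and $v$, which makes your argument slightly more self-contained.
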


We now prove the following lemma, which gives the second assertion of Proposition~\ref{Prop_rN}.

\begin{lem} \label{Lem_error terms at real edge}
Let $x_0 = \pm \sqrt{2 \frac{1+\tau}{1-\tau}}$. 
Then as $N \to \infty$ and for arbitrary $\eps > 0$, we have
\begin{equation}
   E_N^1\Big( x_0 \pm \sqrt{\tfrac{2}{N}} z, x_0 \pm \sqrt{\tfrac{2}{N}} w \Big)
    = \erfc(z+w) + \tfrac{\sqrt{2}\curvature_2}{\sqrt{N}} e^{-(z+w)^2} \tfrac{2z^2-2zw+2w^2-1}{3\sqrt{\pi}} + \OO(N^{-1+\eps}) \label{EN1 asym edge}
\end{equation}
and
\begin{align}
\begin{split}
    &E_N^2\Big( x_0 \pm \sqrt{\tfrac{2}{N}} z, x_0 \pm \sqrt{\tfrac{2}{N}} w \Big)= \tfrac{1}{\sqrt{2}} e^{-2z^2} \erfc(\sqrt{2}w)  
    \\
    &+ \tfrac{\curvature_2}{\sqrt{N}} e^{-2z^2} \Big[ (\tfrac{2}{3}z^3 - \tfrac{\tau}{1+\tau}z ) \erfc(\sqrt{2}w) - \tfrac{1}{\sqrt{2\pi}} (\tfrac{1}{1+\tau}-\tfrac{1+2w^2}{3}) e^{-2w^2} \Big]  
    +\OO(N^{-1+\eps}). \label{EN2 asym edge}
\end{split}
\end{align}
The error terms are uniform for bounded $z, w$.
\end{lem}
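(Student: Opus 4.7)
The proof splits into the asymptotics of $E_N^1$ and $E_N^2$ separately.

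For $E_N^1$, the starting observation is that, comparing \eqref{E_N^1} with \eqref{calK} and using that the Hermite polynomials have real coefficients (so $\overline{H_k(\bar u)} = H_k(u)$ for real parameters), one obtains the identification
\begin{equation*}
E_N^1(\xi,\omega) = \tfrac{2}{N}\,\calK_{2N}(\xi,\omega),
\end{equation*}
where the second argument $\omega$ is to be interpreted as the complex conjugate of the $w$-argument in the $\calK$-notation. With this, \eqref{EN1 asym edge} is an immediate consequence of Proposition~\ref{LeeRiser edge kernel} at $T=2$, $n=2N$, $z_0=x_0=\sqrt{2(1+\tau)/(1-\tau)}$ and outer unit normal $\mathbf{n}=1$: the substitutions $\xi_{\textup{Prop}}=\sqrt{2}\,z$, $\omega_{\textup{Prop}}=\sqrt{2}\,\bar w$ produce $(\xi_{\textup{Prop}}+\bar\omega_{\textup{Prop}})/\sqrt{2}=z+w$ and $\xi_{\textup{Prop}}^2-\xi_{\textup{Prop}}\bar\omega_{\textup{Prop}}+\bar\omega_{\textup{Prop}}^2-1=2z^2-2zw+2w^2-1$, while a direct computation from the semi-axes $\sqrt{T(1\pm\tau)/(1\mp\tau)}$ of the ellipse $\partial K_2$ confirms that its boundary curvature at $x_0$ equals $\curvature_2=\tfrac{1}{\sqrt{2}}(\tfrac{1+\tau}{1-\tau})^{3/2}$. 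The negative-edge case follows by the reflection symmetry $x\mapsto -x$ of the potential.

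For $E_N^2$, the expression \eqref{E_N^2} factorises,
\begin{equation*}
E_N^2(\xi,\omega)=2\sqrt{1-\tau^2}\,A_N(\xi)\,B_N(\omega),
\end{equation*}
with $A_N(\xi)$ collecting the $H_{2N}$-term together with its Gaussian weight, and $B_N(\omega)$ collecting the partial sum $\sum_{l=0}^{N-1}\tfrac{(\tau/2)^l}{(2l)!!}H_{2l}(\cdots\omega)$ with its own Gaussian weight. The asymptotic of $A_N(x_0+\sqrt{2/N}\,z)$ follows by direct substitution into the strong Hermite expansion \eqref{Hermite strong asymptotic expansion} with $T=2$, $R=0$, combined with the edge expansions \eqref{Lemma A1 2}--\eqref{Lemma A1 3} of Lemma~\ref{Estimates for E_N^2 at edge} and Stirling's formula for the prefactor $(\tau/2)^N(2N)!/((2N-1)!!)$, retaining both leading and $1/\sqrt{N}$ contributions. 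For $B_N$, since the Gaussian weight dominates the degree-$(2N-2)$ polynomial as $\omega\to +\infty$ along $\R$, one has $B_N(\omega)\to 0$, hence the integration-from-infinity representation
\begin{equation*}
E_N^2(\xi,\omega)=-\int_\omega^{+\infty}\partial_\omega E_N^2(\xi,t)\,dt .
\end{equation*}
By Lemma~\ref{Lem_ EN1 EN2 deri}, the integrand is the clean bilinear product of $H_{2N}(\cdots\xi)$ and $H_{2N-1}(\cdots t)$ times an explicit Gaussian, so the strong Hermite asymptotic applies to both Hermite factors. After the substitution $t=x_0+\sqrt{2/N}\,s$ and splitting $\int_w^{\infty}ds=\int_w^{N^\nu}+\int_{N^\nu}^{\infty}$ for a fixed $\nu\in(0,1/6)$, the edge expansions of Lemma~\ref{Estimates for E_N^2 at edge} apply to two orders on the compact-scale region $[w,N^\nu]$, while \eqref{Lemma A1 1} shows the tail $[N^\nu,\infty)$ to be exponentially negligible. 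Elementary integration of $e^{-2s^2}$ against polynomials in $s$ (using integration by parts for the $1/\sqrt{N}$ correction terms) then produces the leading $\tfrac{1}{\sqrt{2}}e^{-2z^2}\erfc(\sqrt{2}w)$ and the structured $1/\sqrt{N}$-correction of \eqref{EN2 asym edge}.

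The $E_N^1$ half is essentially free once the identification with $\calK_{2N}$ is made. The main obstacle is the bookkeeping of the subleading $1/\sqrt{N}$ contribution to $E_N^2$: four distinct correction sources, namely the Stirling prefactor, the exponential phase through $\Omega$, the two $\sqrt{\psi'}$ factors, and the $\psi^{-1}$ piece arising from the $R=-1$ expansion of $H_{2N-1}$, must be tracked consistently through the $s$-integration and verified to collapse into the compact polynomial form in $z,w$ displayed in \eqref{EN2 asym edge}. This is the error-prone step.
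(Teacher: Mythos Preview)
Your treatment of $E_N^1$ via the identification $E_N^1=\tfrac{2}{N}\calK_{2N}$ and Proposition~\ref{LeeRiser edge kernel} is exactly the paper's argument.

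For $E_N^2$ you take a genuinely different route. The paper integrates $\partial_\omega E_N^2$ from the base point $\omega_0=0$, which forces an explicit computation of the initial value $E_N^2(\xi,0)$ via $H_{2l}(0)=(-1)^l(2l)!/l!$ and the Taylor series \eqref{E_N^2 Taylor sum}; the path from $0$ to $\omega$ is then split at $x_0-\sqrt{2/N}\,N^\alpha$ into a bulk piece (exponentially negligible by \eqref{Lemma A1 1}) and an edge piece. Your idea of integrating instead towards $+\infty$, using the trivial boundary value $B_N(+\infty)=0$, is cleaner in that it sidesteps the initial-value computation altogether; it also makes your separate discussion of the $A_N$ asymptotic redundant, since the $\xi$-dependent factor is already handled inside the integral via Lemma~\ref{Lem_ EN1 EN2 deri}.

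There is, however, a mis-citation that you should fix. The tail of your integral lies in the region $t\in[x_0+\sqrt{2/N}\,N^\nu,\infty)$, i.e.\ on the \emph{exterior} side of the droplet, whereas \eqref{Lemma A1 1} gives the lower bound on $\tfrac12\Omega(s)$ only for $s\in[0,x_0-\sqrt{2/N}\,N^\alpha]$, the \emph{interior} side. You need the analogous exterior estimate: near $x_0$ the same two-sided quadratic expansion $\Omega(t)=2|t-x_0|^2+\OO(|t-x_0|^3)$ used in the proof of \eqref{Lemma A1 1} applies, and far from $x_0$ one checks directly from \eqref{g function LR} and \eqref{Omega LR} that $\Omega(t)\sim(1-\tau)t^2$ as $t\to+\infty$, which both bounds the tail by $\OO(e^{-N^{2\nu}})$ and guarantees absolute convergence of the infinite integral. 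This is straightforward but is not contained in the lemma you cite. On the unbounded tail you must also invoke the uniform bound \eqref{Hermite strong asymptotic bound} rather than the expansion \eqref{Hermite strong asymptotic expansion}, which is uniform only on compact sets.
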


\begin{proof}

We shall consider the case $x_0 = +\sqrt{2\frac{1+\tau}{1-\tau}}$ as the other case works analogously.

The expansion \eqref{EN1 asym edge} for $E_N^1$ immediately follows from Proposition~\ref{LeeRiser edge kernel} using the relation 
$$ 
E_N^1(\xi, \omega) = \tfrac{2}{N} \calK_{2N}(\xi, \omega).
$$
Thus it suffices to show the expansion \eqref{EN2 asym edge} for $E_N^2$. 
To analyse $E_N^2$, choose $0 < \alpha < \nu < 1/6$ and let us write
\begin{align} 
\begin{split} \label{EN2 edge decom}
&\quad E_N^2\Big( x_0 + \sqrt{\tfrac{2}{N}}z, x_0 + \sqrt{\tfrac{2}{N}}w \Big) = E_N^2 \Big( x_0 + \sqrt{\tfrac{2}{N}}z, 0 \Big)
\\
&+ \int_{0}^{x_0 - \sqrt{\frac{2}{N}}N^\alpha} \pa_{\omega} E_N^2\Big( x_0 + \sqrt{\tfrac{2}{N}}z, s \Big) \, ds  + \sqrt{\frac{2}{N}} \int_{-N^\alpha}^{w} \pa_{\omega} E_N^2 \Big( x_0 + \sqrt{\tfrac{2}{N}}z, x_0 + \sqrt{\tfrac{2}{N}}s \Big) \, ds.
\end{split}
\end{align}

We first show that for $z=\OO(N^\nu)$ uniformly,
    \begin{equation} \label{EN2 asymp edge 1}
    E_N^2\Big( x_0+\sqrt{\tfrac{2}{N}}z, 0 \Big) = \sqrt{2} \, e^{-2z^2} \Big[ 1 + \sqrt{\tfrac{2}{N}}\curvature_2(\tfrac{2}{3}z^3-\tfrac{\tau}{1+\tau}z) + \OO(N^{-1+6\nu}) \Big].
    \end{equation}
Note that by \eqref{Hermite strong asymptotic expansion} and Stirling's formula, we have
\begin{equation*}
\begin{split}
\frac{(\tau/2)^N}{(2N-1)!!} H_{2N}\Big( \sqrt{N\tfrac{1-\tau^2}{2\tau}} \xi \Big) &= \frac{1}{(2\pi (1-\tau^2) N)^{1/4}} \frac{\sqrt{(2N)!}}{(2N-1)!!} \sqrt{\psi'(\xi)} e^{N(2g(\xi)-l/2)} \Big(1+\OO(1/N) \Big) 
\\
& = \frac{1}{(2(1-\tau^2))^{1/4}} \sqrt{\psi'(\xi)} e^{N(2g(\xi)-l/2)} \Big(1+\OO(1/N)\Big), 
\end{split}
\end{equation*}
where the $\OO(1/N)$-term is uniform for $\xi$ in a compact subset of $\C \setminus [-F_0, F_0]$.
Applying the estimates \eqref{Lemma A1 2} and \eqref{Lemma A1 3}, the desired asymptotic behaviour \eqref{EN2 asymp edge 1} follows from the expression \eqref{E_N^2} and the summation formula \eqref{E_N^2 Taylor sum}. 

Next, we show that for $z=\OO(N^\nu)$ uniformly, 
    \begin{equation} \label{EN2 asymp edge 2}
    \int_{0}^{x_0-\sqrt{\frac{2}{N}}N^\alpha} \pa_\omega E_N^2\Big( x_0 + \sqrt{\tfrac{2}{N}}z, s \Big) \, ds = e^{-2z^2} \OO(N^{ \frac23 }e^{-N^{2\alpha}}).
    \end{equation}
For this, we derive the asymptotic behaviour of $\pa_\omega E_N^2(\xi, s)$ using Lemma~\ref{Lem_ EN1 EN2 deri}. 
Note that by \eqref{Hermite strong asymptotic bound} and \eqref{Hermite strong asymptotic expansion}, we have 
\begin{align}
\abs{\pa_\omega E_N^2(\xi, s)} &= 2(1-\tau) (\tfrac{2}{\pi(1-\tau^2)})^{\frac14} \abs{\psi'(\xi)} 
\\
&\quad \times \exp\Big( N \re \Big[-\tfrac{1-\tau}{2}\xi^2 +2g(\xi)-\tfrac{l}{2}-\tfrac{1-\tau}{2}s^2 +2g(s) -\tfrac{l}{2} \Big] \Big)\, \OO(N^{\frac23}).
\end{align} 
Moreover, it follows from \eqref{Lemma A1 1}, \eqref{Lemma A1 2} and \eqref{Lemma A1 3} that
\begin{equation}
\Big| \pa_{\omega} E_N^2 \Big( x_0+\sqrt{\tfrac{2}{N}}z, s \Big) \Big| = e^{-2\re z^2} \OO(N^{ \frac23 } e^{-N^{2\alpha}})
\end{equation}
uniformly for $0\leq s\leq x_0-\sqrt{\frac{2}{N}}N^{\alpha}$ and $z=\OO(N^\nu)$.
Now by using the triangle inequality and integrating this asymptotic, we obtain \eqref{EN2 asymp edge 2}. 

Finally, we show that for $z, w = \OO(N^\nu)$ uniformly, 
    \begin{align}
    \begin{split}  \label{EN2 asymp edge 3}
    &\quad \sqrt{\tfrac{2}{N}} \int_{-N^\alpha}^{w} \pa_\omega E_N^2\Big( x_0 + \sqrt{\tfrac{2}{N}}z, x_0 + \sqrt{\tfrac{2}{N}}s \Big) \, ds 
    = -\tfrac{1}{\sqrt{2}} e^{-2z^2} (1+\erf(\sqrt{2}w))
    \\
    &\quad -\tfrac{\curvature_2}{\sqrt{N}} e^{-2z^2} \Big[ (\tfrac{2}{3}z^3-\tfrac{\tau}{1+\tau}z)(1+\erf(\sqrt{2}w)) +\tfrac{1}{\sqrt{2\pi}}(\tfrac{1}{1+\tau}-\tfrac{1+2w^2}{3})e^{-2w^2} \Big]  
    \\
    &\quad + e^{-2z^2 + 2(\im w)^2} \OO(N^{-1+7\nu}).
    \end{split}
    \end{align}
Similarly as above, we use Lemma~\ref{Lem_ EN1 EN2 deri} and the asymptotics \eqref{Hermite strong asymptotic expansion},  \eqref{Lemma A1 2}, which leads to
$$
\pa_{\omega} E_N^2\Big( x_0+\sqrt{\tfrac{2}{N}}z, x_0+\sqrt{\tfrac{2}{N}}s \Big) = -\sqrt{\tfrac{2N}{\pi}} e^{-2(z^2+s^2)} \Big[ 1 + \sqrt{\tfrac{2}{N}} \, \curvature_2 \big( \tfrac{2}{3}(z^3+s^3) - \tfrac{s+\tau z}{1+\tau} \big) + \OO(N^{-1+6\nu}) \Big].
$$
Here $\curvature_2$ is given by \eqref{KT LR} with $T=2$. By integration, \eqref{EN2 asymp edge 3} follows from elementary asymptotic behaviour of the error function (see e.g.\ \cite[Eq.(7.12.1)]{olver2010nist}).  

Combining \eqref{EN2 edge decom} with the asymptotics \eqref{EN2 asymp edge 1}, \eqref{EN2 asymp edge 2} and \eqref{EN2 asymp edge 3} and using that $z, w$ are bounded (but $\nu > 0$), we obtain the desired expansion \eqref{EN2 asym edge} for $E_N^2$. Choosing $\eps > 7 \nu$ completes the proof.
\end{proof}

\subsection{Outside the bulk} 

This subsection is devoted to proving the last statement of Proposition~\ref{Prop_rN}. Again, this is an immediate consequence of the following lemma. 

\begin{lem} \label{Error terms outside real bulk}
Let $x_0 \in \R$ with $\abs{x_0} > \sqrt{2 \frac{1+\tau}{1-\tau}}$. Then there exists a neighbourhood $U \subset \C$ of $x_0$ and an $\eps > 0$ such that the following estimates hold uniformly for $\xi, \omega \in U$:
\begin{equation*}
E_N^1(\xi, \omega) = \OO(e^{-N\eps}), \qquad
E_N^2(\xi, \omega) = \OO(e^{-N\eps}).
\end{equation*}
\end{lem}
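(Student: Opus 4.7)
The argument parallels the proof of Lemma~\ref{Error terms in real bulk} (the real bulk case), with the single structural change that $x_0 \notin K_2$. Consequently $\Omega(x_0) > 0$ by \cite[Lemma D.1]{MR3450566}, and by continuity there exist $\eps > 0$ and a compact neighbourhood $V \subset \C \setminus K_2$ of $x_0$ with $\Omega(z) \geq 2\eps$ on $V$. I would keep the same integral decompositions:
\begin{align*}
E_N^1(\xi, \omega) &= E_N^1(\bar{\omega}, \omega) + \int_{\bar{\omega}}^{\xi} \partial_\xi E_N^1(t, \omega) \, dt, \\
E_N^2(\xi, \omega) &= E_N^2(\xi, 0) + \int_{0}^{\omega} \partial_\omega E_N^2(\xi, t) \, dt.
\end{align*}

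For the derivative bounds, Lemma~\ref{Lem_ EN1 EN2 deri} combined with the Hermite bound \eqref{Hermite strong asymptotic bound} reproduces verbatim the estimates from the bulk proof:
\begin{align*}
|\partial_\xi E_N^1(t, \omega)| &\leq C N^{5/6} \exp\Big( -N \big[\tfrac{1}{2}\Omega(t) + \tfrac{1}{2}\Omega(\omega) - \tfrac{1}{2}|t - \bar{\omega}|^2 \big] \Big), \\
|\partial_\omega E_N^2(\xi, t)| &\leq C N^{5/6} \exp\Big( -N \big[\tfrac{1}{2}\Omega(\xi) + \tfrac{1}{2}\Omega(t) - (\im \xi)^2 - (\im t)^2 \big] \Big),
\end{align*}
where I use the identity $\re[\tfrac{1-\tau}{2}z^2 - 2g(z) + \tfrac{l}{2}] = \tfrac{1}{2}\Omega(z) - (\im z)^2$ (a direct computation from the definitions). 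Restricting to $U := V \cap \{|z - x_0| < \sqrt{\eps/2}\} \cap \{|\im z| < \sqrt{\eps/2}\}$ forces both exponents to be bounded below by $N\eps/2$, so both derivatives are $\OO(N^{5/6} e^{-N\eps/2})$ on $U$.

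For the initial values, the bound $E_N^2(\xi, 0) = \OO(N^{1/6} e^{-N\eps/2})$ follows verbatim from the corresponding argument in Lemma~\ref{Error terms in real bulk}, since it only uses $\Omega(\xi) > 2\eps$ together with $(\im \xi)^2 < \eps/2$ for $\xi \in U$, both of which persist outside the droplet. The genuinely new ingredient is the large-deviation statement
$$
E_N^1(\bar{\omega}, \omega) = \OO(e^{-N\eps}), \qquad \omega \in U,
$$
which says that the one-point density of the complex elliptic Ginibre ensemble with $2N$ eigenvalues is exponentially small outside its droplet $K_2$. This is the exterior analogue of \cite[Lemma D.4]{MR3450566}, and follows from exactly the same Hermite strong asymptotics applied to the Christoffel-Darboux representation \eqref{Derivative S_N} of $\calK_{2N}$, the point being that the sign of $\Omega$ now forces decay rather than convergence to a constant.

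Plugging the initial-value and derivative bounds into the integral decompositions yields $E_N^1(\xi, \omega), E_N^2(\xi, \omega) = \OO(e^{-N\eps})$ uniformly for $\xi, \omega \in U$. The only step requiring any work beyond a direct transcription of the bulk argument is the exterior version of \cite[Lemma D.4]{MR3450566} needed for $E_N^1(\bar{\omega}, \omega)$; since all the requisite tools (Plancherel--Rotach asymptotics and the strict positivity of $\Omega$ outside $K_2$) are already in place, this is a routine adaptation rather than a genuine obstacle.
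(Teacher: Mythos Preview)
Your approach is essentially correct and in fact slightly more economical than the paper's, but there is one gap you should address. For $E_N^2$ you keep the initial point $\omega_0=0$ and integrate $\int_0^{\omega}\partial_\omega E_N^2(\xi,t)\,dt$; however, the derivative bound you establish is explicitly stated only for $t\in U\subset\C\setminus K_2$, while the path from $0$ to $\omega$ necessarily crosses the droplet $K_2$. So as written, the argument does not cover the integrand along the whole contour. The fix is easy and uses only ingredients you already have: take the contour along the real axis from $0$ to $\re\omega$ and then vertically to $\omega$. On the real segment your identity gives
\[
\tfrac{1}{2}\Omega(\xi)-(\im\xi)^2+\tfrac{1}{2}\Omega(t)-(\im t)^2\;\ge\;\eps-\tfrac{\eps}{2}+0-0\;=\;\tfrac{\eps}{2},
\]
since $\Omega\ge0$ globally and $\im t=0$; the short vertical segment lies in $U$, where your stated bound applies. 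Once this is said, your proof goes through.

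The paper takes a genuinely different route for $E_N^2$: it moves the initial point to $\omega_0=x_0$, so the integration path stays inside $U$ and the derivative bound applies directly. The price is that the initial value $E_N^2(\xi,x_0)$ involves the sum $\sum_{l=0}^{N-1}\frac{(\tau/2)^l}{(2l)!!}H_{2l}\big(\sqrt{N\tfrac{1-\tau^2}{2\tau}}\,x_0\big)$ with $x_0$ outside the droplet, which has no convenient closed form; the paper controls it via an ad hoc monotonicity lemma for Hermite polynomials (Lemma~\ref{Lem_Hermite ineq}), bounding the sum by $N$ times its last term. Your choice $\omega_0=0$ sidesteps that lemma entirely, since at $\omega=0$ the sum collapses to the elementary Taylor series \eqref{E_N^2 Taylor sum}. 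So, modulo the contour remark above, your argument is the shorter one.
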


In the proof of Lemma~\ref{Error terms outside real bulk} we shall use an estimate for Hermite polynomials.

\begin{lem} \label{Lem_Hermite ineq}
For all $\tau \in (0, 1)$, $0 \leq l \leq N-1$ and $\abs{\omega_0} \geq \sqrt{2\frac{1+\tau}{1-\tau}}$, we have
\begin{equation} \label{Hermite ineq}
\Big| \frac{(\tau/2)^l}{(2l)!!} H_{2l}\Big( \sqrt{N\tfrac{1-\tau^2}{2\tau}} \omega_0 \Big) \Big|
\leq \Big| \frac{(\tau/2)^{l+1}}{(2l+2)!!} H_{2l+2}\Big( \sqrt{N\tfrac{1-\tau^2}{2\tau}} \omega_0 \Big) \Big|.
\end{equation}
\end{lem}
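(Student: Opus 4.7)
My plan is to reduce the estimate to a three-term recurrence for even-indexed Hermite polynomials, and then prove the resulting ratio bound by induction on $l$. Setting $x := \sqrt{N(1-\tau^2)/(2\tau)}\,\omega_0$, the hypothesis $|\omega_0| \geq \sqrt{2(1+\tau)/(1-\tau)}$ translates to $\tau x^2 \geq N(1+\tau)^2$; since $H_{2l}$ is an even polynomial, I may assume $x>0$. Applying the standard recurrence \eqref{three term} twice and eliminating $H_{2l-1}$ via $2xH_{2l-1}(x) = H_{2l}(x) + (4l-2)H_{2l-2}(x)$ gives
\begin{equation*}
H_{2l+2}(x) = (4x^2-8l-2)\,H_{2l}(x) - 8l(2l-1)\,H_{2l-2}(x), \qquad l\geq 1,
\end{equation*}
with the initial case $H_2(x) = (4x^2-2)H_0(x)$. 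Setting $h_l := \frac{(\tau/2)^l}{(2l)!!}H_{2l}(x)$, the target inequality \eqref{Hermite ineq} becomes $h_{l+1} \geq h_l > 0$, and the recurrence above renormalises to
\begin{equation*}
\frac{4(l+1)}{\tau}\,h_{l+1} = (4x^2-8l-2)\,h_l - 2\tau(2l-1)\,h_{l-1}, \qquad l \geq 1.
\end{equation*}

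I would then prove by induction on $l \in \{0,1,\ldots,N-1\}$ the joint statement $h_l > 0$ and $\rho_l := h_{l+1}/h_l \geq 1$. The base $l=0$ gives $h_0 = 1$ and $h_1 = \frac{\tau(2x^2-1)}{2}$, and $\rho_0 \geq 1$ is equivalent to $\tau x^2 \geq 1+\tau/2$, which follows from $\tau x^2 \geq (1+\tau)^2 \geq 1+\tau/2$. For the inductive step the $h$-recurrence rearranges to
\begin{equation*}
\rho_l = \frac{\tau(4x^2-8l-2)}{4(l+1)} - \frac{\tau^2(2l-1)}{2(l+1)\,\rho_{l-1}},
\end{equation*}
and the inductive hypothesis $\rho_{l-1} \geq 1$ yields $\rho_l \geq \frac{\tau(4x^2-8l-2)}{4(l+1)} - \frac{\tau^2(2l-1)}{2(l+1)}$. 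A direct rearrangement shows that $\rho_l \geq 1$ reduces to
\begin{equation*}
\tau x^2 \geq l(1+\tau)^2 + \tfrac{2+\tau-\tau^2}{2}.
\end{equation*}
Combining $\tau x^2 \geq N(1+\tau)^2$ with $l \leq N-1$ gives $\tau x^2 - l(1+\tau)^2 \geq (1+\tau)^2$, and the elementary inequality $(1+\tau)^2 \geq \tfrac{2+\tau-\tau^2}{2}$ (equivalent to $3\tau(1+\tau)/2 \geq 0$) closes the induction.

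The main obstacle is organising the algebra so that the normalising factor $\frac{(\tau/2)^l}{(2l)!!}$ absorbs cleanly into the two-step recurrence for $H_{2l}(x)$, producing a clean coupling of $h_{l+1}, h_l, h_{l-1}$. Once this is in place, the inductive inequality $\rho_l \geq 1$ is tightest at $l = N-1$, which is precisely where the hypothesis $\tau x^2 \geq N(1+\tau)^2$ — encoding that $\omega_0$ lies at or beyond the edge $\sqrt{2(1+\tau)/(1-\tau)}$ of the spectrum — becomes sharp. I expect the remaining verification to be purely algebraic and not to require any extra analytic input about Hermite polynomials beyond the three-term recurrence.
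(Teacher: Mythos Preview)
Your argument is correct and follows essentially the same strategy as the paper: reduce via the three-term recurrence \eqref{three term} and prove the resulting ratio bound by induction on $l$, the key algebraic input in both cases being $\tau x^2 \geq N(1+\tau)^2$ (equivalently $y\geq\sqrt{N}$ in the paper's notation). The only organisational difference is that the paper retains odd-degree Hermite polynomials and inducts on the auxiliary bound $H_{2l}(x) \leq \tfrac{\tau y}{2(l+1)} H_{2l+1}(x)$, whereas you eliminate them upfront to obtain a two-step recurrence among the even-degree quantities $h_l$; both routes collapse to the same elementary inequality $(1+\tau)^2 \geq \tfrac{2+\tau-\tau^2}{2}$ at the endpoint $l=N-1$.
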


The proof of this lemma is given in Appendix~\ref{Appendix 2}. 

\begin{proof}[Proof of Lemma~\ref{Error terms outside real bulk}]
The proof is merely same as that of Lemma~\ref{Error terms in real bulk} except the fact that in the representation \eqref{eq:E_N^12 integral decomposition}, we choose $\xi_0$, $\omega_0$  outside of the droplet.
In particular, the estimates of the terms involving the derivatives of $E_N^1$ or $E_N^2$ are the same as those in Lemma~\ref{Error terms in real bulk}. Thus all we need to analyse are the initial values.

Let us choose a compact subset $V$ of $K^\mathsf{c}_2$.
For $E_N^1$, it again follows from \cite[Lemma D.4]{MR3450566} that for an $\eps>0,$
\begin{equation*}
E_N^1(\bar{\omega}, \omega)
= \OO(e^{-N\eps}), \qquad (\omega \in V).
\end{equation*}

It remains to analyse $E_N^2$ in \eqref{E_N^2}. 
For the $\xi$-dependent part of $E_N^2$, the estimates above work, but the $\omega$-dependent part of $E_N^2$ requires special care.
Let us choose $\omega_0 = x_0$. 
Then by combining Lemma~\ref{Lem_Hermite ineq} and the asymptotics \eqref{Hermite strong asymptotic bound}, we have
\begin{align}
\begin{split}
\Big| e^{ -\tfrac{1}{2}N(1-\tau) x_0^2 } \sum_{l = 0}^{N - 1} \frac{(\tau/2)^l}{(2l)!!} H_{2l}\Big( \sqrt{N\tfrac{1-\tau^2}{2\tau}} x_0 \Big) \Big|
&\leq e^{ -\tfrac{1}{2}N(1-\tau) x_0^2 } N \frac{(\tau/2)^N}{(2N)!!}  \Big| H_{2N}\Big( \sqrt{N\tfrac{1-\tau^2}{2\tau}} x_0 \Big) \Big|
\\
&= e^{-N\re [ \frac{1}{2}(1-\tau)x_0^2 -2g(x_0) +\frac{l}{2} ] } \sqrt{\tfrac{(2N-1)!!}{(2N)!! \, N}} \, \OO(N^{1+ \frac{5}{12} }).
\end{split}
\end{align}
Since $x_0$ is real, we have
\begin{equation*}
\Re\Big[ \tfrac{1}{2}(1-\tau)x_0^2 -2g(x_0) +\tfrac{l}{2} \Big]
= \frac{1}{2} \Omega(x_0)
> \eps.
\end{equation*}
Then by Stirling's formula, we obtain
\begin{equation*}
\exp\Big( -\tfrac{1}{2}N(1-\tau) x_0^2 \Big) \sum_{l = 0}^{N - 1} \frac{(\tau/2)^l}{(2l)!!} H_{2l}\Big( \sqrt{N\tfrac{1-\tau^2}{2\tau}} x_0 \Big)
= \OO(N^{ \frac23 } e^{-N\eps}), 
\end{equation*}
which leads to
\begin{equation*}
E_N^2(\xi, x_0) = 2\sqrt{1-\tau^2} \, \OO(N^{ \frac16 } e^{-\frac{1}{2}N\eps}) \OO(N^{ \frac23 }e^{-N\eps}) = \OO(N^{ \frac56 }e^{-\frac{3}{2}N\eps}).
\end{equation*}
This completes the proof.
\end{proof}

\subsection{Proof of the main theorem} \label{Subsec_main thm proof}

In this subsection, we complete the proof of Theorem~\ref{Thm_local ell}. 
By Corollary~\ref{Cor_ODE transform} and Proposition~\ref{Prop_rN}, it remains to solve the resulting differential equations. For this, we first need the following lemma, which readily follows from the basic theory of ordinary differential equations. 

\begin{lem} \label{Lem_ODE rate}
Let $a, r_N: \C \to \C$ be analytic functions, $f_0 \in \C$ and for all $N \in \N$ let $f_N: \C \to \C$ be the solution to
\begin{equation*}
f_N' = a\, f_N + r_N, \qquad f_N(z_0) = f_0.
\end{equation*}
If $r_N \to r$ uniformly on compact subsets of $\C$, then the limit $f(z) := \lim_{N \to \infty} f_N(z)$ exists for all $z \in \C$, is analytic and satisfies
\begin{equation*}
f' = a\, f + r, \qquad f(z_0) = f_0.
\end{equation*}
Moreover suppose that as $N\to \infty$, the inhomogeneous term $r_N$ has the asymptotic expansion 
\begin{equation*}
r_N = r + r^{(\gamma)}/N^\gamma + \OO(1/N^{\wt{\gamma}}), \qquad (\wt{\gamma}>\gamma), 
\end{equation*}
where $\OO(1/N^{\wt{\gamma}})$-term is uniform on compact subsets of $\C$.
Then
\begin{equation*}
f_N = f + f^{(\gamma)}/N^\gamma + \OO(1/N^{\wt{\gamma}})
\end{equation*}
also holds uniformly on compact subsets of $\C$ and the subleading term $f^{(\gamma)}$ satisfies
\begin{equation*}
f^{(\gamma)\prime} = a\, f^{(\gamma)} + r^{(\gamma)}, \qquad f^{(\gamma)}(z_0) = 0.
\end{equation*}
\end{lem}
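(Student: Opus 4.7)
The plan is to use the standard integrating factor formula for the linear first-order ODE, which yields the explicit representation, and then estimate directly.

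First I would fix a primitive $A(z) := \int_{z_0}^z a(\zeta)\,d\zeta$ of $a$ along any path (well-defined since $a$ is entire, hence $A$ is entire). Multiplying the ODE for $f_N$ by $e^{-A(z)}$ turns the left-hand side into an exact derivative, so integrating from $z_0$ to $z$ and using the initial condition gives
\begin{equation*}
f_N(z) = e^{A(z)}\Big(f_0 + \int_{z_0}^z e^{-A(\zeta)} r_N(\zeta)\,d\zeta\Big),
\end{equation*}
and analogously for $f$ and $f^{(\gamma)}$ (with $f_0$ replaced by $0$ in the last case). The integral can be taken along any rectifiable path in $\C$ since the integrand is entire. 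Subtracting, one obtains
\begin{equation*}
f_N(z)-f(z) = e^{A(z)} \int_{z_0}^z e^{-A(\zeta)}\,(r_N-r)(\zeta)\,d\zeta.
\end{equation*}

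For the first assertion, given a compact $K \subset \C$, I would take straight line paths from $z_0$ to every $z \in K$; their union forms a compact set $K' \supset K \cup \{z_0\}$. On $K'$ the factors $e^{\pm A(\zeta)}$ are uniformly bounded, the length of the path is uniformly bounded, and $r_N \to r$ uniformly on $K'$. The ML-inequality then gives $f_N \to f$ uniformly on $K$; analyticity of the limit follows either from uniform convergence of analytic functions or directly from the integral representation, and $f$ manifestly satisfies the limit ODE with $f(z_0)=f_0$ by differentiating under the integral sign.

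For the refined expansion, I would apply exactly the same estimate to
\begin{equation*}
f_N(z) - f(z) - \tfrac{1}{N^\gamma} f^{(\gamma)}(z)
= e^{A(z)} \int_{z_0}^z e^{-A(\zeta)}\, \Big(r_N(\zeta) - r(\zeta) - \tfrac{r^{(\gamma)}(\zeta)}{N^\gamma}\Big)\,d\zeta,
\end{equation*}
whose integrand is $\OO(C_N)$ uniformly on $K'$ by hypothesis. The ML-inequality again yields $\OO(C_N)$ uniformly on $K$. Differentiating the defining integral representation of $f^{(\gamma)}$ shows it satisfies the claimed inhomogeneous equation with vanishing initial condition at $z_0$. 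There is no real obstacle here: the entire argument is the classical variation-of-constants formula combined with uniform boundedness on compact sets, which is why the paper is content to omit it.
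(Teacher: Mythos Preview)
Your proposal is correct and is precisely the approach the paper indicates: the authors explicitly state that only a straightforward estimate of the well-known explicit solution to the linear inhomogeneous ODE is needed and therefore skip the proof. Your use of the integrating factor $e^{-A(z)}$, the variation-of-constants representation, and the ML-inequality on compact sets is exactly what they have in mind.
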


\begin{proof}
Recall that the solution of our linear inhomogeneous differential equation is given by the variation-of-constants formula
\begin{equation*}
    f_N(z) = U(z, z_0) f_0 + \int_{z_0}^{z} U(z, t) r_N(t) \, dt, \qquad 
    U(z, t) := \exp\Big( \int_{t}^{z} a(s) \, ds \Big).
\end{equation*}
For any compact subset $K$ of $\C$, there exists $\rho_K > 0$ such that
\begin{equation*}
    \abs{r(z) + r^{(\gamma)}(z) / N^\gamma - r_N(z)} \leq \rho_K/N^{\wt{\gamma}}.
\end{equation*}
for all $N \in \N.$
Set $f(z) = U(z, z_0) f_0 + \int_{z_0}^{z} U(z, t) r(t) \, dt$ and $f^{(\gamma)}(z) = \int_{z_0}^{z} U(z, t) r^{(\gamma)}(t) \, dt$. 
Then one can show that the following inequality holds for all $z \in K$:
\begin{equation*}
\begin{split}
    \abs{f(z) + f^{(\gamma)}(z) / N^\gamma - f_N(z)}
    & \leq \abs{z - z_0} \cdot \max_{t \in K} \abs{U(z, t)} \cdot \max_{t \in K} \abs{r(t) + r^{(\gamma)}(t) / N^\gamma - r_N(t)} \\
    & \leq \diam(K) \, e^{A \cdot \diam(K)} \rho_K/N^{\wt{\gamma}},
\end{split}
\end{equation*}
where $A := \max_{s \in K} \re a(s) < \infty$.
Hence $f_N$ has the same rate of convergence as $r_N$.
\end{proof}

Using Lemma~\ref{Lem_ODE rate} with $\gamma = 1/2$ and $\wt{\gamma}=1$, we can now derive the large-$N$ asymptotics of $\widehat{\kappa}_N(z,w)$. 
For the proof of Theorem~\ref{Thm_local ell}, we recall a notion of the equivalent kernels. (See also \cite{akemann2021scaling}.)

\begin{rmk*} (Equivalent kernels)
Consider a $2k\times2k$ skew-symmetric matrix $A = (a_{j,l})_{j,l}$ and let $g_j \in \C$ for $1 \leq j \leq 2k$.
The new matrix $B = (g_j g_l a_{j,l})_{j,l}$ is again skew-symmetric and we have $\Pf(B) = \prod_{j=1}^{2k} g_j \, \Pf(A)$.
Furthermore, if $g_{2l} = 1/g_{2l-1}$ for $1 \leq l \leq k$, then $\Pf(B) = \Pf(A)$.
This implies that different kernels can give rise to the same correlation functions.
In particular, we call two pre-kernels $\bfkappa$ and $\widetilde{\bfkappa}$ \emph{equivalent} if there exists a unimodular function $g:\C \to \C$ with $g(\overline{\zeta}) = 1/g(\zeta)$ such that $\widetilde{\bfkappa}(\zeta, \eta) = g(\zeta) g(\eta) \bfkappa(\zeta, \eta)$.
Also, we call the combination $c(\zeta, \eta) := g(\zeta) g(\eta)$ a \emph{cocycle}.
\end{rmk*}

\begin{proof}[Proof of Theorem~\ref{Thm_local ell}]
Recall that by definition \eqref{transformed kernel}, we have $\kappa_N= \widehat{\kappa}_N/ \omega_N$.
Note that 
$$
e^{-\frac{N}{2} ( Q( p+\frac{z}{\sqrt{N \delta}} )+Q( p+ \frac{w}{\sqrt{N \delta}} ) )} \frac{1}{\omega_N(z,w)}
=e^{-\abs{z}^2 -\abs{w}^2 +2zw}  \, \frac{1}{c_N(z,w)},
$$
where the cocycle $c_N(z,w)$ is given by 
\begin{equation}
c_N(z,w)=\exp\Big( -i\sqrt{2N\tfrac{1-\tau}{1+\tau}}\, p \, \im z - i\sqrt{2N\tfrac{1-\tau}{1+\tau}}\, p \, \im w +i\tau\im z^2 +i\tau \im w^2 \Big).
\end{equation}
Therefore as $N \to \infty$, the equivalent kernels have the uniform limit
\begin{equation} \label{up to cocycle}
\lim_{N\to\infty} c_N(z,w) e^{-\frac{N}{2} ( Q( p+\frac{z}{\sqrt{N \delta}} )+Q( p+ \frac{w}{\sqrt{N \delta}} ) )} \kappa_N(z,w)
= e^{-\abs{z}^2 -\abs{w}^2 +2zw} \, \widehat{\kappa}(z,w).
\end{equation}

By Proposition~\ref{Prop_rN}, one can see that for the leading order, the differential equation \eqref{ODE tau transformed with error terms} coincides with the one in \cite{MR1928853} (for the real bulk case) and in \cite{akemann2021scaling} (for the real edge case). Moreover, it was shown that the kernels \eqref{kappa bulk} and \eqref{kappa edge} are the associated solution in each case. (Here the uniqueness follows from the anti-symmetry of the pre-kernel.)
On the other hand, for the case outside the droplet the differential equation is trivial. 
Therefore only the derivation of $\kappa_{\textup{edge}}^{\R,1/2}$ in \eqref{kappa edge sub} remains.

By Corollary~\ref{Cor_ODE transform}, Proposition~\ref{Prop_rN}, Lemma~\ref{Lem_ODE rate} and \eqref{up to cocycle}, the function 
$$\widehat{\kappa}_{\textup{edge}}^{\R,1/2}(z,w):=e^{-2zw}\,\kappa_{\textup{edge}}^{\R,1/2}(z,w)
$$ 
satisfies the differential equation
\begin{equation}
\pa_z \widehat{\kappa}_{\textup{edge}}^{\R,1/2}(z,w)=2(z-w) \widehat{\kappa}_{\textup{edge}}^{\R,1/2}(z,w)+r^{(1/2)}(z,w). 
\end{equation}
This differential equation can be easily solved by virtue of an integrating factor. 
Furthermore, the anti-symmetry $ \widehat{\kappa}_{\textup{edge}}^{\R,1/2}(z,w)=-\widehat{\kappa}_{\textup{edge}}^{\R,1/2}(w,z)$ determines the solution as 
\begin{equation*}
\begin{split}
\widehat{\kappa}_{\textup{edge}}^{\R,1/2}(z,w) &= \tfrac{1}{\sqrt{2}}(\tfrac{1+\tau}{1-\tau})^{\frac32} e^{(z-w)^2}  
\\
&\quad \times \int_{w}^{z} e^{-2t^2} \Big[ \tfrac{1}{\sqrt{2\pi}} (\tfrac{4}{3}t^2-\tfrac{4}{3}tw+\tfrac{2}{3}w^2-\tfrac{\tau}{1+\tau})e^{-2w^2} - (\tfrac{2}{3}t^3-\tfrac{\tau}{1+\tau}t)\erfc(\sqrt{2}w) \Big]\, dt \\
&=\tfrac{1}{12\sqrt{2}}(\tfrac{1+\tau}{1-\tau})^{\frac32} e^{(z-w)^2} \Big[ \Big((2z^2+\tfrac{1-2\tau}{1+\tau})e^{-2z^2}\erfc(\sqrt{2}w) + 2\sqrt{\tfrac{2}{\pi}}\,w e^{-2(z^2+w^2)}\Big) - \Big(z \leftrightarrow w \Big) \Big],
\end{split}
\end{equation*}
which completes the proof. 
\end{proof}

\begin{appendix}

\section{Some preliminary estimates} \label{Appendix}

In this appendix, we show some preliminary estimates used in Section~\ref{sec:Strong Limit}. 

\subsection{Proof of Lemma~\ref{Estimates for E_N^2 at edge}} \label{Appendix 1}

The proof of Lemma~\ref{Estimates for E_N^2 at edge} merely follows from the computations given in \cite{MR3450566}. 

First, we prove \eqref{Lemma A1 1}. 
By definitions \eqref{g function LR}, \eqref{Omega LR}, we have that for $s \in \R$,
\begin{equation*}
\re\Big[ \tfrac{1-\tau}{2}s^2 -Tg(s) +\tfrac{l}{2} \Big] = \frac{1}{2} \Omega(s).
\end{equation*}
Then \cite[Lemma~3.3]{MR3450566} implies the existence of a threshold $\delta > 0$ such that the approximation $$\Omega(s) = 2\abs{s-x_0}^2 + \OO(\abs{s-x_0}^3)$$ 
holds uniformly for $s \in [x_0-\delta, x_0+\delta]$.
Additionally,
recall that $\Omega(s) \geq 0$ for all $s$ and $\Omega(s)=0$ only for $s$ on the boundary of the droplet, i.e.\ $s=x_0$.
Therefore we have an $\eps>0$ with $\frac{1}{2}\Omega(s)\geq\eps$ for all $s\in[0, x_0-\delta]$.
For $s\in[x_0-\delta, x_0-\sqrt{2/N}N^\alpha]$ we can use the quadratic approximation from above to derive
\begin{equation}
\frac{1}{2}\Omega(s) \geq 2N^{-1+2\alpha} + \OO(N^{-3/2+3\alpha}) \geq N^{-1+2\alpha}.
\end{equation}
Here the last inequality holds for large enough $N$ if $\alpha<1/2$.

Next, we show the second assertion \eqref{Lemma A1 2}.
By \cite[Lemma~3.2]{MR3450566}, we have the expansion 
\begin{equation*}
Tg \Big( x_0+\sqrt{\tfrac{2}{N}}z \Big) = \frac{1-\tau}{2} x_0^2 + \frac{l}{2} + (1-\tau) x_0 \sqrt{\frac{2}{N}} z - \frac{1+\tau}{N} z^2 + \frac{2\sqrt{2}\curvature_T}{3} \frac{z^3}{N^{3/2}} + \OO\Big(\frac{z^4}{N^2}\Big).
\end{equation*}
Thus we obtain
\begin{equation*}
\exp\Big( -N\Big[ -\tfrac{1-\tau}{2} \Big( x_0+\sqrt{\tfrac{2}{N}}z \Big)^2 + Tg \Big( x_0+\sqrt{\tfrac{2}{N}}z \Big) -\frac{l}{2} \Big] \Big)
= e^{-2z^2} \exp\Big[ \tfrac{2\sqrt{2}}{3}\curvature_T \tfrac{z^3}{\sqrt{N}} + \OO\Big( \tfrac{z^4}{N} \Big)\Big],
\end{equation*}
which leads to \eqref{Lemma A1 2}.

For the last assertion \eqref{Lemma A1 3}, we use Taylor expansion around $x_0$ analogous to \cite[Lemma~3.5]{MR3450566}. 
Note that for $x_0 \in \R$, we have
\begin{equation}
\psi(x_0) = 1, \qquad
\psi'(x_0) = \sqrt{\frac{1+\tau}{T(1-\tau)}}, \qquad
\psi''(x_0) = -\frac{2\tau(1+\tau)}{T(1-\tau)^2}.
\end{equation}
By direct computations, this gives rise to \eqref{Lemma A1 3} using the convention $\curvature_T= \frac{1}{\sqrt{T}}(\frac{1+\tau}{1-\tau})^{3/2}$.

\subsection{Proof of Lemma~\ref{Lem_Hermite ineq}} \label{Appendix 2} 

It is well known that the zeros of the Hermite polynomial of degree $n$ lie in the interval $(-\sqrt{2n+1}, \sqrt{2n+1})$, see e.g.\ \cite[Section 18.16]{olver2010nist}. 
Thus we have excluded all zeros.
Additionally since the Hermite polynomials are positive as $\omega_0 \to \pm \infty$, we can omit the absolute values.

We simplify the inequality \eqref{Hermite ineq} by multiplying both sides with $(2l)!! \, (2/\tau)^l$ and applying the three term recurrence relation \eqref{three term} to $H_{2l+2}$.
Let us rewrite the argument of the polynomials as 
$$
\sqrt{N(1-\tau^2)/(2\tau)} \, \omega_0=(1+\tau)y/\sqrt{\tau}.
$$
Then Lemma~\ref{Lem_Hermite ineq} follows from the fact that for $\tau \in (0, 1)$, $0 \leq l \leq N-1$ and $y \geq \sqrt{N}$,
\begin{equation*}
\Big( 1 + \frac{2l+1}{2l+2}\tau \Big) H_{2l}\Big( \tfrac{1+\tau}{\sqrt{\tau}}y \Big) \leq \frac{\tau (1+\tau)}{2(l+1)}y \, H_{2l+1}\Big( \tfrac{1+\tau}{\sqrt{\tau}}y \Big).
\end{equation*}

Let us show the slightly stricter inequality
\begin{equation}\label{Hermite ineq v2}
H_{2l}\Big( \tfrac{1+\tau}{\sqrt{\tau}}y \Big) \leq \frac{\tau}{2(l+1)}y \, H_{2l+1}\Big( \tfrac{1+\tau}{\sqrt{\tau}}y \Big)
\end{equation}
by an induction argument.  
For $l = 0$, the inequality \eqref{Hermite ineq v2} is obvious since $H_0(x) = 1$ and $H_1(x) = 2x$. 

From now on, we shall write $a_l=\tau y/(2l+2)$ and $x=(1+\tau)y/\sqrt{\tau}$ to lighten the notations.  
Due to the three term recurrence relation \eqref{three term}, it suffices to show that 
\begin{equation}
H_{2l}(x) \leq a_l H_{2l+1}(x) = a_l \Big[2xH_{2l}(x)-4lH_{2l-1}(x)\Big]. 
\end{equation}
Again, by the three term recurrence relation, this is equivalent to 
\begin{equation}
H_{2l-2}(x) \leq \frac{2a_l(x^2-l)-x}{(2l-1)(2a_lx-1)} H_{2l-1}(x).
\end{equation}
By the induction hypothesis, we have  $H_{2l-2}(x) \leq a_{l-1} H_{2l-1}(x)$. 
Therefore to complete the induction step it is enough to show that
\begin{equation} \label{aux estimate}
a_{l-1} \leq \frac{2a_l(x^2-l)-x}{(2l-1)(2a_lx-1)} , \qquad \text{i.e.} \qquad  \frac{\sqrt{\tau}}{2l} \leq \frac{(1+\tau)^2y^2-\tau l-(1+\tau)(l+1)}{\sqrt{\tau}(2l-1)\big((1+\tau)y^2-(l+1)\big)}.
\end{equation}
This inequality \eqref{aux estimate} follows from the straightforward computations using elementary algebra and we leave it to the interested readers.

\subsection*{Acknowledgements} The authors are greatly indebted to Gernot Akemann for suggesting the problem, careful reading, and much appreciated help with improving this manuscript.
It is also our pleasure to thank Seung-Yeop Lee and Roman Riser for helpful discussions. 
The present work was completed when the authors visited the Department of Mathematical Sciences at Korea Advanced Institute of Science and Technology, and we wish to express our gratitude to Ji Oon Lee for the invitation and hospitality.

\end{appendix}


\bibliographystyle{abbrv}
\bibliography{RMTbib}
\end{document}